\newcommand{\abs}[1]{\left|#1\right|}
\newcommand{\norm}[1]{\left\|#1\right\|}
\newtheorem{theorem}{Theorem}[section]
\newtheorem{proposition}[theorem]{Proposition}
\theoremstyle{definition}
\newtheorem{definition}[theorem]{Definition}
\newtheorem{property}[theorem]{Property}
\title{Harmonic Singular Integrals and Steerable Wavelets in $L_2(\mathbb{R}^d)$  \thanks{This research was funded in part by ERC Grant ERC-2010-AdG 267439-FUN-SP.}}
\author{
John Paul Ward \thanks{Biomedical Imaging Group, \'Ecole polytechnique f\'ed\'erale de Lausanne (EPFL),
Station 17, CH-1015, Lausanne, Switzerland  ({\tt john.ward@epfl.ch}).}
\and
Michael Unser \thanks{Biomedical Imaging Group, \'Ecole polytechnique f\'ed\'erale de Lausanne (EPFL),
Station 17, CH-1015, Lausanne, Switzerland  ({\tt michael.unser@epfl.ch}).}
 }
\begin{document}

\maketitle

\begin{abstract}
Here we present a method of constructing steerable wavelet frames in $L_2(\mathbb{R}^d)$ that generalizes and unifies previous approaches, including Simoncelli's pyramid and Riesz wavelets.
The motivation for steerable wavelets is the need to more accurately account for the orientation of data.  Such wavelets can be constructed by decomposing an isotropic mother wavelet into a finite collection of oriented mother wavelets. The key to this construction is that the angular decomposition is an isometry, whereby the new collection of wavelets maintains the frame bounds of the original one.  The general method that we propose here is based on partitions of unity involving spherical harmonics.  A fundamental aspect of this construction is that Fourier multipliers composed of spherical harmonics correspond to singular integrals in the spatial domain.  Such transforms have been studied extensively in the field of harmonic analysis, and we take advantage of this wealth of knowledge to make the proposed construction practically feasible and computationally efficient.
\end{abstract}

\section{Introduction}\label{sec:intro}
Building upon \cite{unser13}, our purpose in this paper is to provide a systematic and practical approach to the construction and implementation of steerable wavelets in higher dimensions.  The basis of this construction is the theory of singular integral transforms on $\mathbb{R}^d$ with kernels of the form $\Omega(\bm{x}/\abs{\bm{x}})/\abs{\bm{x}}^d$, where $\Omega$ is a smooth function defined on $\mathbb{S}^{d-1}$.  Properties of such transforms were studied by Mikhlin \cite{mikhlin65}, and Calder\'on and Zygmund \cite{calderon57}. Useful resources for this material are the books of Stein and Weiss, \cite{stein70,stein71}, which we shall use as primary references. An attractive feature of singular integral transforms is their correspondence with Fourier multiplier transforms. For instance, if $\Omega$ is a spherical harmonic, then the singular integral transform corresponds to a Fourier multiplier that is a multiple of $\Omega$.  Furthermore, the spherical harmonics (in particular, the zonal spherical harmonics) satisfy symmetry properties which make them ideal for applications requiring rotations.

The two key ingredients for our construction of steerable wavelets are:
\begin{itemize}
\item[1)] an isotropic, band-limited mother wavelet $\psi$ that generates a primary wavelet frame of $L_2(\mathbb{R}^d)$;
\item[2)] a finite collection of functions $\{m_n\}_{n=1}^{n_{\text{max}}}$ which generate a partition of unity on the sphere:
\begin{equation*}
\sum_{n=1}^{n_{\text{max}}} \abs{m_n(\bm{\omega})}^2=1,
\end{equation*}
where the $m_n$ are purely polar functions; i.e., $m_n(\bm{\omega})=m_n(\bm{\omega}/\abs{\bm{\omega}})$. 
\end{itemize}

\noindent
The steerable wavelet frame is then generated by the functions $\{\mathcal{F}^{-1}\{m_n\widehat{\psi}\} \}$.  The limitation on the functions $m_n$ are minimal; however, we shall focus on zonal spherical harmonics as they make implementation more amenable.  Notice that  decomposing a signal in this enlarged dictionary of wavelets provides more information about the local orientation of the data; meanwhile, the partition of unity property guarantees that the frame bounds are preserved.

We shall devote the remainder of this section to some basic notation. In Section \ref{sec:sing_int}, we shall recall some results about singular integral transforms and their relation to tight frames.  In Section \ref{sec:sph_harm}, we shall cover some details about spherical harmonics and properties of related transforms.  Finally, in Section \ref{sec:steer_wave}, we shall describe the steerable wavelet construction, and in Section \ref{sec:zonal_harm} we conclude with some specifics about wavelets based on spherical harmonics.

\subsection{Notation}
The function spaces that we shall consider are the $L_p(\mathbb{R}^d)$ spaces, with norm

\begin{equation*}
\norm{f}_{L_p(\mathbb{R}^d)} = \left( \int_{\mathbb{R}^d} \abs{f(\bm{x})}^p {\rm d}\bm{x} \right)^{1/p},
\end{equation*} 
for $1\leq p <\infty$, and our primary focus shall be with $L_2(\mathbb{R}^d)$.  Following the notation of \cite{stein71}, we define the Fourier transform of a function $f\in L_1(\mathbb{R}^d)$ to be

\begin{align*}
\widehat{f}(\bm{\omega})=\mathcal{F}\{f\}(\bm{\omega})&=\int_{\mathbb{R}^d}f(\bm{x})\overline{e^{2\pi i \bm{x}\cdot \bm{\omega}}} {\rm d}\bm{x}\\
&=\int_{\mathbb{R}^d}f(\bm{x})e^{-2\pi i \bm{x}\cdot \bm{\omega}} {\rm d}\bm{x}
\end{align*}
and the inverse Fourier transform of $f$ is denoted by $\mathcal{F}^{-1}\{f\}$.  For a radial function $f(\bm{x})=f_r(\abs{\bm{x}})$, we can write the Fourier transform as
 
\begin{align*}
\mathcal{F}\{f\}(\bm{\omega})&=\int_0^{\infty} t^{d-1} f_r(t) \int_{\mathbb{S}^{d-1}} e^{-2\pi i t(\bm{x}/\abs{\bm{x}})\cdot \bm{\omega}} {\rm d}\sigma \left(\frac{\bm{x}}{\abs{\bm{x}}}\right) {\rm d}t\\
&= 2\pi \abs{\bm{\omega}}^{-(d-2)/2} \int_0^\infty f_r(t) J_{(d-2)/2}(2\pi \abs{\bm{\omega}t})t^{d/2} {\rm d} t
\end{align*}
where $\sigma$ is the usual surface measure on $\mathbb{S}^{d-1}$ and $J_{(d-2)/2}$ is the Bessel function of the first kind of order $(d-2)/2$, cf. 
\cite[Section VIII.3]{stein93}. The area of the sphere is given by

\begin{equation*}
\sigma(\mathbb{S}^{d-1}) = \frac{2\pi^{d/2}}{\Gamma(d/2)},
\end{equation*}
where $\Gamma$ denotes the Gamma function.

For vector valued functions $f:\mathbb{R}^d\rightarrow \mathbb{C}^N$, we denote the $n$th component by $[f]_n$. The space of such functions, all of whose components are $L_2(\mathbb{R}^d)$ functions, will be denoted by $L_2^N(\mathbb{R}^d)$, and we define the norm
\begin{equation*}
\norm{f}_{L_2^N(\mathbb{R}^d)}=\left(\sum_{j=1}^N \norm{[f]_n}_{L_2(\mathbb{R}^d)}^2 \right)^{1/2}.
\end{equation*}

\section{Singular integrals and Fourier multipliers}\label{sec:sing_int}

In this section, we recall some relevant results from the theory of singular integrals and provide a basis for the construction of steerable wavelets.  One of the key ingredients in this construction is a collection of functions which generate a partition of unity. In this section we shall show how particular classes of such functions behave as Fourier multipliers.

\begin{definition}\label{def:adm}
A collection of complex valued functions $\mathcal{M}=\{m_n\}_{n=1}^{n_{\text{max}}}$ will be called admissible if 
\begin{enumerate}
\item Each $m_n$ is Lebesgue measurable and homogeneous of degree $0$; i.e., $m_n(a \bm{\omega})=a^0 m_n(\bm{\omega})=m_n(\bm{\omega})$ for all $a>0$ and $\bm{\omega}\neq 0$;
\item The squared moduli of the elements of $\mathcal{M}$ form a partition of unity:
\begin{equation*}
\sum_{n=1}^{n_{\text{max}}} \abs{m_n(\bm{\omega})}^2=1
\end{equation*}
for every $\bm{\omega}\in \mathbb{R}^{d}\backslash \{\bm{0}\}$.
\end{enumerate}
\end{definition}

\noindent
The partition of unity property implies that $\abs{m_n(\bm{\omega})}^2\leq 1$, so each function is a valid Fourier multiplier on $L_2(\mathbb{R}^d)$; i.e.
\begin{equation*}
\norm{\mathcal{F}^{-1}\{m_n\widehat{f}\} }_{L_2(\mathbb{R}^d)} \leq \norm{f}_{L_2(\mathbb{R}^d)}.
\end{equation*}
Therefore, we can define a transform mapping $L_2(\mathbb{R}^d)$ to the vector valued space $L_2^{n_{\text{max}}}(\mathbb{R}^d)$ as follows.

\begin{definition}
Given an admissible collection $\mathcal{M}$, define the transform $T_{\mathcal{M}}:L_2(\mathbb{R}^d)\rightarrow L_2^{n_{\text{max}}}(\mathbb{R}^d) $ by
\begin{equation*}
\left[T_{\mathcal{M}}(f) \right]_n=\mathcal{F}^{-1}\{m_n \widehat{f}\},
\end{equation*}
and its adjoint $T_{\mathcal{M}}^*:L_2^{n_{\text{max}}}(\mathbb{R}^d) \rightarrow L_2(\mathbb{R}^d) $ by
\begin{equation*}
T_{\mathcal{M}}^*(f) = \mathcal{F}^{-1} \left\{ \sum_n \overline{m}_n \widehat{[f]_n}  \right\}.
\end{equation*}

\end{definition}

\noindent
Note that, since we apply these transforms to wavelets, the homogeneity condition makes sense, as it means that $m_n$ is invariant to scaling. 

\begin{property}
The transform $T_{\mathcal{M}}$ maps a wavelet family into another one in the sense that 
\begin{equation*}
\left[T_{\mathcal{M}}(\psi(\cdot/a-\bm{b})) \right]_n (\bm{x}) = \left[T_{\mathcal{M}}(\psi) \right]_n (\bm{x}/a-\bm{b})
\end{equation*}
for any $\psi \in L_2(\mathbb{R}^d)$, $a\in \mathbb{R}^+$, and $\bm{b}\in\mathbb{R}^d$.
\end{property}

 Also, it follows from Plancherel's identity and the partition of unity condition that $T_{\mathcal{M}}$ is in fact an isometry. Hence, we can apply this transform to a tight frame to generate a new frame with the same frame bounds.

\begin{theorem}\label{th:ghrtpf}
Suppose $\{\phi_k:k\in\mathbb{Z}\}$ is a Parseval frame for $L_2(\mathbb{R}^d)$; i.e.,
\begin{equation}\label{eq:frame_expan}
f = \sum_{k} \left< f, \phi_k  \right> \phi_k,
\end{equation}
and

\begin{equation}\label{eq:frame_bds}
 \sum_k \abs{\left< f, \phi_k \right>}^2 = \norm{f}_2^2
\end{equation}
for every $f\in L_2(\mathbb{R}^d)$.  Then the multiplier transform $T_{\mathcal{M}}$ associated with an admissible collection $\mathcal{M}$ generates a related Parseval frame:  
\begin{equation*}
\left\{\psi_{k,n}=[T_{\mathcal{M}}(\phi_k)]_n:k\in\mathbb{Z},n=1,\dots ,n_{\text{max}} \right\},
\end{equation*}
with 
\begin{align*}
f &= \sum_{n=1}^{n_{\text{max}}} \sum_k \left< f, \psi_{k,n}  \right> \psi_{k,n}.
\end{align*}
\end{theorem}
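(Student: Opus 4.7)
The plan is to reduce both Parseval identities for $\{\psi_{k,n}\}$ to a single application of Plancherel's theorem followed by the Parseval hypothesis \eqref{eq:frame_expan}--\eqref{eq:frame_bds} for $\{\phi_k\}$, with the admissibility of $\mathcal{M}$ (the bound $\abs{m_n}\le 1$ and the partition-of-unity identity) supplying the final sign-off. The key reformulation is that, by Plancherel,
\begin{equation*}
\langle f,\psi_{k,n}\rangle \;=\; \int_{\mathbb{R}^d}\widehat{f}(\bm{\omega})\,\overline{m_n(\bm{\omega})\,\widehat{\phi_k}(\bm{\omega})}\,{\rm d}\bm{\omega} \;=\; \langle g_n,\phi_k\rangle,
\end{equation*}
where $g_n := \mathcal{F}^{-1}\{\overline{m_n}\widehat{f}\}$ lies in $L_2(\mathbb{R}^d)$ since $\abs{m_n}\le 1$. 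This identity is precisely what converts every coefficient against the new frame into a coefficient against the old one.

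With this in hand, the Parseval bound is immediate: for each fixed $n$, \eqref{eq:frame_bds} applied to $g_n$ gives $\sum_k\abs{\langle f,\psi_{k,n}\rangle}^2 = \norm{g_n}_{L_2(\mathbb{R}^d)}^2$, and summing over $n$, using Plancherel, and invoking the partition of unity yields
\begin{equation*}
\sum_{n=1}^{n_{\text{max}}}\sum_k\abs{\langle f,\psi_{k,n}\rangle}^2 \;=\; \int_{\mathbb{R}^d}\Big(\sum_{n=1}^{n_{\text{max}}}\abs{m_n(\bm{\omega})}^2\Big)\abs{\widehat{f}(\bm{\omega})}^2\,{\rm d}\bm{\omega} \;=\; \norm{f}_{L_2(\mathbb{R}^d)}^2.
\end{equation*}
For the reconstruction I would again fix $n$ and apply \eqref{eq:frame_expan} to $g_n$, obtaining $\sum_k\langle g_n,\phi_k\rangle\phi_k = g_n$ in $L_2$; taking Fourier transforms, multiplying by the bounded symbol $m_n$, and inverting produces
\begin{equation*}
\sum_k\langle f,\psi_{k,n}\rangle\,\psi_{k,n} \;=\; \mathcal{F}^{-1}\{m_n\widehat{g_n}\} \;=\; \mathcal{F}^{-1}\{\abs{m_n}^2\widehat{f}\}.
\end{equation*}
Summing in $n$ and applying the partition of unity a second time collapses this to $f$.

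The only technical step worth remarking on is the interchange of $\mathcal{F}$ with the infinite series, but this is harmless since partial sums converge in $L_2$ and $\mathcal{F}$ is unitary on $L_2(\mathbb{R}^d)$. I do not anticipate any real obstacle: the entire content of the theorem is already encoded in the two admissibility conditions, and the argument amounts to one Plancherel manipulation plus routine bookkeeping, exactly as foreshadowed by the remark preceding the theorem that $T_{\mathcal{M}}$ is an isometry.
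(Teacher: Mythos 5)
Your proof is correct and follows essentially the same route as the paper: your $g_n=\mathcal{F}^{-1}\{\overline{m}_n\widehat f\}$ is exactly the component $[T_{\overline{\mathcal{M}}}(f)]_n$ that the paper expands in the original frame, and both the reconstruction and the tightness arguments then proceed identically via the duality $\langle f,\psi_{k,n}\rangle=\langle g_n,\phi_k\rangle$ and the partition of unity. The only difference is notational: the paper phrases the bookkeeping through the operators $T_{\overline{\mathcal{M}}}$ and $T_{\overline{\mathcal{M}}}^*$ rather than componentwise.
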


\begin{proof}
Our proof follows the same lines as \cite[Proposition 1]{unser10}. Notice that if $\mathcal{M}$ is admissible, then so is $\overline{\mathcal{M}}:=\{\overline{m}_n\}_{n=1}^{n_{\text{max}}}$. From the definition of $\mathcal{M}$
\begin{equation*}
f = T_{\mathcal{\overline{M}}}^* T_{\mathcal{\overline{M}}} f.
\end{equation*}
Additionally, each component of $ T_{\mathcal{\overline{M}}} f  $ can be expanded in the original frame
\begin{equation*}
f = T_{\mathcal{\overline{M}}}^* \mathbf{F}
\end{equation*}
where $\mathbf{F}$ is the function with components
\begin{align*}
[\mathbf{F}]_n &= \sum_{k} \left< [T_{\mathcal{\overline{M}}}(f)]_n, \phi_k  \right> \phi_k\\
&= \sum_{k} \left< f, [T_{\mathcal{M}}(\phi_k)]_n \right> \phi_k\\
\end{align*}
The reproduction property now follows by computing the product $ T_{\mathcal{\overline{M}}}^* \mathbf{F}$:
\begin{align*}
f &= \mathcal{F}^{-1} \left\{\sum_n m_n \sum_k \left< f, [T_{\mathcal{M}}(\phi_k)]_n  \right> \widehat{\phi}_k \right\}\\
&= \sum_n \sum_k \left< f, \psi_{k,n}  \right> \psi_{k,n}.
\end{align*}
To verify that the frame is still tight, write
\begin{align*}
 \norm{[T_{\mathcal{\overline{M}}}(f)]_n}_2^2 &= \sum_k \abs{\left< [T_{\mathcal{\overline{M}}}(f)]_n, \phi_k \right>}^2 \\
 &= \sum_k \abs{\left< f, \psi_{k,n}\right>}^2,
\end{align*}
so that summing over $n$ gives the result.
\end{proof}

Notice that our Definition \ref{def:adm}  of admissibility is fairly general and directly exploitable for implementing wavelets in the Fourier domain.  However, if a feasible spatial domain representation is required, we must impose certain restrictions. A reasonable condition is to assume that the elements of $\mathcal{M}$ are smooth, since it allows us to relate the multiplier transform to a singular integral transform.

\begin{theorem}\cite[Theorem III.6]{stein70}\label{th:sift}
Let $m$ be homogeneous of degree 0 and indefinitely differentiable on $\mathbb{S}^{d-1}$. Then for $1<p<\infty$ the Fourier multiplier transform $T:L_p(\mathbb{R}^d)\rightarrow L_p(\mathbb{R}^d) $ given by

\begin{equation*}
T(f) = \mathcal{F}^{-1}\{m\widehat{f}\}
\end{equation*}
can be computed by the singular integral

\begin{equation*}
T(f)(\bm{x}) = cf(\bm{x})+\lim_{\epsilon\rightarrow 0}\int_{\abs{\bm{y}}>\epsilon}\frac{\Omega(\bm{y})}{\abs{\bm{y}}^d}f(\bm{x}-\bm{y}){\rm d}\bm{y},
\end{equation*}
where $c$ is the mean value of $m$ on $\mathbb{S}^{d-1}\subset\mathbb{R}^d$ and the functions $m$ and $\Omega$ are related by

\begin{equation*}
m(\bm{\omega})=c + \int_{\mathbb{S}^{d-1}} \left(\frac{\pi i}{2} \text{\rm sign}(\bm{\omega}\cdot\bm{y} )+\log \left(\frac{1}{\abs{\bm{\omega}\cdot\bm{y}}}\right) \right) \Omega(\bm{y}){\rm d} \sigma(\bm{y}).
\end{equation*}
\end{theorem}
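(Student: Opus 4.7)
Since this statement is cited as Stein's Theorem III.6, the plan is to sketch the classical Calder\'on--Zygmund argument identifying smooth homogeneous multipliers of degree $0$ with principal-value singular integrals. First, I would decompose $m = c + \tilde m$, where $c$ is the spherical mean, so that the constant part contributes the $cf(\bm{x})$ term on the spatial side via Plancherel; matters then reduce to realizing the mean-zero multiplier $\tilde m$ as the Fourier symbol of a principal-value convolution kernel of the form $\Omega(\bm{x}/\abs{\bm{x}})/\abs{\bm{x}}^d$ with $\Omega$ of mean zero on $\mathbb{S}^{d-1}$.

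The core calculation is then to compute, as a tempered distribution, the Fourier transform of $K(\bm{x}) = \mathrm{p.v.}\,\Omega(\bm{x}/\abs{\bm{x}})/\abs{\bm{x}}^d$. Pairing $K$ against a Schwartz test function and passing to polar coordinates produces a regularized radial integral of the form $\lim_{\epsilon\to 0}\int_\epsilon^{1/\epsilon} e^{-2\pi i r \bm{\omega}\cdot\bm{y}'}\,dr/r$. Its imaginary part is a Dirichlet-type integral yielding $-\frac{\pi}{2}\,\text{sign}(\bm{\omega}\cdot\bm{y}')$, while its real part gives $\log(1/\abs{\bm{\omega}\cdot\bm{y}'})$ up to a divergent additive constant. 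The mean-zero condition on $\Omega$ annihilates that constant upon integration against $\Omega(\bm{y}')\,d\sigma(\bm{y}')$, leaving exactly the integral formula in the statement.

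Given this forward relation between $m$ and $\Omega$, the construction of $\Omega$ from a prescribed $\tilde m$ amounts to inverting an integral equation on the sphere whose kernel depends only on $\bm{\omega}\cdot\bm{y}'$. Expanding both sides in spherical harmonics and invoking the Funk--Hecke formula diagonalizes this operator, so that $\Omega$ is read off coefficientwise; smoothness of $m$ on $\mathbb{S}^{d-1}$ translates to smoothness of $\Omega$ via the decay of the corresponding spherical harmonic coefficients. Once $\Omega$ is in hand, the $L_p$ boundedness of the singular integral for $1<p<\infty$ follows from the standard Calder\'on--Zygmund theorem for smooth homogeneous mean-zero kernels.

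The main obstacle is the delicate justification of the radial regularization: since $1/r$ is non-integrable at both endpoints, exchanging the radial limit with the spherical integration must be done carefully, and it is precisely the cancellation of the divergent constants against the mean-zero condition on $\Omega$ that renders the final formula well-defined. A secondary technical point is checking that the inversion on the sphere preserves enough regularity for Calder\'on--Zygmund theory to apply, which reduces to estimating the Funk--Hecke eigenvalues of the zonal kernel.
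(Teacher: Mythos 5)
The paper offers no proof of this statement---it is quoted verbatim from Stein's book---so the only benchmark is the classical argument, which your sketch reproduces faithfully: split off the spherical mean, compute the distributional Fourier transform of the principal-value kernel in polar coordinates, and invert the resulting zonal integral operator degree-by-degree in spherical harmonics. Two points worth pinning down: the inversion step needs not only growth estimates but also the \emph{nonvanishing} of the Funk--Hecke eigenvalues (Stein computes them explicitly as constants of the form $i^{\ell}\pi^{d/2}\Gamma(\ell/2)/\Gamma((\ell+d)/2)$ for $\ell\ge 1$, which are nonzero and decay only polynomially in $\ell$, so rapid decay of the coefficients of $m$ passes to $\Omega$ and smoothness is preserved); and your $-\tfrac{\pi}{2}\,\mathrm{sign}(\bm{\omega}\cdot\bm{y})$ for the imaginary part of the regularized radial integral should be reconciled with the Fourier convention in force, under which the quoted formula carries $+\tfrac{\pi i}{2}\,\mathrm{sign}(\bm{\omega}\cdot\bm{y})$.
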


\section{Spherical harmonics and singular integral operators}\label{sec:sph_harm}

The class of smooth admissible functions with which we shall be primarily concerned are the spherical harmonics. These can be viewed as a multi-dimensional extension of the trigonometric polynomials in one dimension. The key property is that it is possible to construct an orthonormal basis of $L_2(\mathbb{S}^{d-1})$ using spherical harmonics.  As Fourier multipliers, the spherical harmonics of degree one correspond to the Riesz transform, which has previously been used to construct steerable wavelets \cite{unser11}. It turns out that transforms based on any of the spherical harmonics have similar properties.  One remarkable aspect is the symmetric role assumed by the spatial and frequency variables and the fact that the Fourier transforms are part of the same family, cf. \cite[Chapter IV]{stein71},\cite[Chapter III]{stein70}.  For the benefit of the reader, we review the properties of the spherical harmonics that are relevant for our purpose.

To begin, we recall that a homogeneous polynomial of degree $\ell$ on $\mathbb{R}^d$ is a linear combination of monomials

\begin{equation*}
\sum_{\abs{\bm{\alpha}}=\ell} c_{\bm{\alpha}} \bm{\omega}^{\bm{\alpha}},
\end{equation*}
where the degree $\abs{\bm{\alpha}}=\alpha_1+\cdots+\alpha_d$ of each monomial $\bm{\omega}^{\bm{\alpha}}=\omega_1^{\alpha_1}\cdots \omega_d^{\alpha_d}$ is $\ell$ and each $c_{\bm{\alpha}}$ is complex.  A homogeneous polynomial $P$ is harmonic if it satisfies Laplace's equation

\begin{equation*}
0=\Delta P = \sum_{k=1}^d  \left(\frac{{\rm d} P}{{\rm d}\omega_k} \right)^2.
\end{equation*}
The spherical harmonics of degree $\ell$, denoted by $\mathscr{H}_\ell$, can then be defined as the restrictions to the sphere of such polynomials.  Specifically, every homogeneous harmonic polynomial $P$ of degree ${\ell}$ defines a spherical harmonic $Y\in\mathscr{H}_\ell$ by the equation

\begin{equation*}
Y(\bm{\omega}) = P\left(\bm{\omega}\right)
\end{equation*}
for any $\bm{\omega}\in \mathbb{S}^{d-1}\subset \mathbb{R}^d$.
Moreover, any homogeneous polynomial $P$ of degree $k \geq 0$ can be expanded as

\begin{equation*}
P(\bm{\omega}) = P_0(\bm{\omega}) + \cdots +\abs{\bm{\omega}}^{2\ell}P_{\ell}(\bm{\omega}),
\end{equation*}
where each $P_{j}$ is a homogeneous harmonic polynomial of degree $k-2j$, cf. \cite[Theorem IV.2.1]{stein71}. This means that the proposed steerable wavelet construction includes previous constructions that utilized higher-order Riesz transforms.  

An important property that is implicit in many results is that spherical harmonics of different degrees are orthogonal on the sphere $\mathbb{S}^{d-1}$, cf.   
\cite[Lemma 2]{muller66} or \cite[Corollary IV.2.4]{stein71}.
In particular, since $P_0$ is constant, this implies the spherical harmonics of positive degree have mean zero on the sphere:
\begin{equation*}
\int_{\mathbb{S}^{d-1}} Y(\bm{\omega}) {\rm d} \sigma(\bm{\omega}) = 0 
\end{equation*}
for any $Y\in \mathscr{H}_\ell$ with $\ell>0$.

The dimension of $\mathscr{H}_{\ell}$ can be computed to be 
\begin{equation*}
N(d,\ell) =  \binom{d+\ell-1}{\ell} - \binom{d+\ell-3}{\ell-2}.
\end{equation*}
We shall use the following notation to denote a real-valued orthonormal basis of $\mathscr{H}_{\ell}$:

\begin{equation*}
\left\{ Y_{\ell,k}:\mathbb{S}^{d-1}\rightarrow \mathbb{C}: k =1, \dots, N(d,\ell )   \right\}.
\end{equation*}
Summing over $\ell \leq \ell_{\text{max}}$, we can determine the dimension of the space of spherical harmonics of degree at most $\ell_{\text{max}}$ to be $N(d+1,\ell_{\text{max}})$, cf. 
\cite[p. 4]{muller66} or \cite[Chapter 17]{wendland05}.
Explicit constructions of these basis functions are known. For example, the three-dimensional case is analyzed in detail in \cite[Chapter 9]{beals10}, and a general approach for higher dimensions is given in \cite[Chapter 2]{muller98}.  

We conclude this review with the property that is most important for our purpose: any orthonormal basis $\{Y_{\ell,k}\}$ of $\mathscr{H}_\ell$ satisfies
\begin{equation}\label{eq:repro}
\sum_{k} \frac{\sigma(\mathbb{S}^{d-1})}{N(d,\ell)}\abs{Y_{\ell,k}(\bm{\omega})}^2 = 1,
\end{equation}
cf. 
\cite[Theorem 2]{muller66} or \cite[Corollary IV.2.9]{stein71}.
This is indeed a powerful property, for it implies that the collection of Fourier multipliers

\begin{equation*}
\left\{m_{k}(\bm{\omega}) =\sqrt{\frac{\sigma(\mathbb{S}^{d-1})}{N(d,\ell)}} Y_{\ell,k}\left(\frac{\bm{\omega}}{\abs{\bm{\omega}}}\right) : k=1,\dots, N(d,\ell) \right\}
\end{equation*}
is admissible.  Furthermore, as we shall see shortly, this property allows us to define a collection of admissible multipliers which contains all of the spherical harmonics up to a fixed degree $\ell_{\text{max}}$.

\subsection{Harmonic Riesz transforms }

The motivation for steerable wavelets is to provide a wavelet decomposition which more accurately accounts for local orientation of data.   In a series of papers  \cite{unser13,unser11,unser10}, steerable wavelets have been constructed using the Riesz transform and its higher-order variants, taking advantage of their scale and rotation invariance and their unitary character.  We recall that the Fourier multiplier of the order $\ell$ Riesz transform of an $L_2(\mathbb{R}^d)$ function $f$ is a vector valued function whose components are $(\sqrt{\ell!/\bm{\alpha}!})\bm{\omega}^{\bm{\alpha}}/\abs{\bm{\omega}}^{\ell}$, where  $\abs{\bm{\alpha}}=\ell$.  In the spatial domain, this translates into a principal value singular integral.
The use of spherical harmonics generalizes but also simplifies this construction, thanks to its orthogonality properties.  

To make our construction precise, we designate the Fourier multiplier transforms associated with the spherical harmonics as harmonic Riesz transforms.

\begin{definition}\label{def:hrt}
For any positive integer $\ell_{\text{max}}$ and any unit vector $\bm{c}=(c_0,\dots,c_{\ell_{\text{max}}})\in\mathbb{R}^{\ell_{\text{max}}+1}$, we define the order $\ell_{\text{max}}$ harmonic Riesz transform to be the multiplier transform $T_{\mathcal{M}}$, where

\begin{equation*}
\mathcal{M}= \left\{m_{\ell,k}(\bm{\omega})= c_\ell\sqrt{\frac{\sigma(\mathbb{S}^{d-1})}{ N(d,\ell)}} Y_{\ell,k}\left(\frac{\bm{\omega}}{\abs{\bm{\omega}}}\right) \right\},
\end{equation*}
where $\ell$ ranges from $0$ to $\ell_{\text{max}}$ and $k$ ranges from $1$ to $N(d,\ell)$.
\end{definition}

Note that if $\bm{c}$ contains entries which are zero, the corresponding multipliers are not included in the transform.  The admissibility of this transform follows immediately from Equation \eqref{eq:repro}.  Furthermore, when applied to smooth functions with vanishing moments, the transform preserves decay as well as vanishing moments. 

\begin{theorem}
Let $\psi$ be a differentiable function (or wavelet) with vanishing moments of order $N\geq 1$ such that $\psi$ and its derivatives satisfy the decay estimates 
\begin{enumerate}
\item  $\abs{\psi(\bm{x})} \leq C(1+\abs{\bm{x}})^{-d-N+\epsilon}$,
\item  $\abs{ D^{\bm{\alpha}} \psi(\bm{x})}\leq C (1+\abs{\bm{x}})^{-d-N-1+\epsilon}, \ \abs{\bm{\alpha}}=1$
\end{enumerate}
for some $C>0$ and $0 \leq \epsilon <1$.  Then for any $\ell \geq 0$ and any $1\leq k \leq N(d,\ell)$, the corresponding component of any harmonic Riesz transform $T_{\mathcal{M}}(\psi)$  has decay similar to $\psi$ and maintains the same number of vanishing moments, i.e.
\begin{equation*}
\abs{[T_{\mathcal{M}}(\psi)]_{\ell,k}(\bm{x})}  \leq C (1+\abs{\bm{x}})^{-d-N+\epsilon '}
\end{equation*}
for some $0 \leq \epsilon ' <1$ and $[T_{\mathcal{M}}(\psi)]_{\ell,k}$ has $N$ vanishing moments.
\end{theorem}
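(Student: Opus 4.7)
The plan is to invoke Theorem~\ref{th:sift} to represent each component of the harmonic Riesz transform as a singular convolution, and then perform region-by-region estimates exploiting the cancellation properties of the spherical harmonics together with the decay and vanishing-moment hypotheses on $\psi$. For $\ell=0$ the multiplier is constant and the conclusion is immediate, so assume $\ell \geq 1$. Then the spherical harmonic $Y_{\ell,k}$ has zero mean on $\mathbb{S}^{d-1}$, which forces the constant $c$ in Theorem~\ref{th:sift} to vanish; moreover, by the standard theory of singular integrals with spherical-harmonic kernels (see \cite[Chap.~III]{stein70}), the associated $\Omega$ is itself smooth and mean-zero on the sphere. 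We therefore work with
\begin{equation*}
[T_{\mathcal{M}}(\psi)]_{\ell,k}(\bm{x}) = \lim_{\epsilon \downarrow 0} \int_{|\bm{y}|>\epsilon}\frac{\Omega(\bm{y}/|\bm{y}|)}{|\bm{y}|^d}\psi(\bm{x}-\bm{y})\,{\rm d}\bm{y},
\end{equation*}
with $\int_{\mathbb{S}^{d-1}}\Omega\,{\rm d}\sigma=0$; this cancellation is the engine of all subsequent estimates.

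To establish the decay bound, assume $|\bm{x}|$ is large (the complementary regime of bounded $\bm{x}$ follows from a standard boundedness argument). Writing $\bm{z}=\bm{x}-\bm{y}$ and decomposing into three disjoint regions, I would treat each separately. In the region $|\bm{z}|\leq|\bm{x}|/2$ (where $\bm{z}$ is near the origin), the kernel $K(\bm{x}-\bm{z})$ is smooth with $|\partial^\alpha K(\bm{x})|\leq C|\bm{x}|^{-d-|\alpha|}$ by homogeneity, so I Taylor-expand $K$ to order $N$ about $\bm{x}$ and exploit the vanishing moments of $\psi$ to kill the polynomial part, leaving only tail contributions $\int_{|\bm{z}|>|\bm{x}|/2}\bm{z}^\alpha\psi(\bm{z})\,{\rm d}\bm{z}=O(|\bm{x}|^{|\alpha|-N+\epsilon})$ plus a Taylor remainder, both of which produce the desired $|\bm{x}|^{-d-N+\epsilon'}$ rate. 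In the region $|\bm{y}|\leq|\bm{x}|/2$ (where $\bm{y}$ is near the origin but $\bm{x}-\bm{y}$ is not), the kernel is singular but $\psi$ is smooth at $\bm{x}$; I use the mean-zero property of $\Omega$ to subtract $\psi(\bm{x})$ inside the integrand, write $\psi(\bm{x}-\bm{y})-\psi(\bm{x})=-\bm{y}\cdot\nabla\psi(\bm{\xi})$ by the mean value theorem, and invoke the decay of $\nabla\psi$ (applicable since $\bm{\xi}$ stays within $|\bm{x}|/2$ of $\bm{x}$) to bound the integrand by $C|\bm{x}|^{-d-N-1+\epsilon}$; the extra factor of $|\bm{x}|$ from the radial integration delivers the correct overall rate. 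The remaining region $|\bm{z}|,|\bm{y}|>|\bm{x}|/2$ is split according to whether $|\bm{z}|\geq 2|\bm{x}|$ (so $|K|\lesssim|\bm{z}|^{-d}$) or $|\bm{x}|/2\leq|\bm{z}|\leq 2|\bm{x}|$ (so $|\psi(\bm{z})|$ is already of the right size and the kernel integrates to a constant over the annulus), and handled by direct estimation.

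Once the pointwise decay is in hand, the $N$ vanishing moments of $[T_{\mathcal{M}}(\psi)]_{\ell,k}$ follow from Fubini: $\bm{x}^\alpha[T_{\mathcal{M}}(\psi)]_{\ell,k}$ is absolutely integrable for $|\alpha|<N$ (because $\epsilon'<1$), so swapping the order of integration produces $\int K(\bm{y})\int\psi(\bm{z})(\bm{z}+\bm{y})^\alpha\,{\rm d}\bm{z}\,{\rm d}\bm{y}$, and expanding $(\bm{z}+\bm{y})^\alpha$ binomially together with the moment-vanishing of $\psi$ makes the inner integral vanish identically in $\bm{y}$. The main obstacle is coordinating the three regional estimates so that the cancellations precisely match the decay budget: in the region near $\bm{y}=0$ the spherical cancellation of $\Omega$ must be combined with the differentiability hypothesis on $\psi$, while in the region near $\bm{z}=0$ the vanishing moments of $\psi$ must be used to subtract a Taylor polynomial from $K$ of high enough order, and the condition $\epsilon<1$ is what guarantees that the tail moments $\int_{|\bm{z}|>|\bm{x}|/2}\bm{z}^\alpha\psi(\bm{z})\,{\rm d}\bm{z}$ converge and yield the sharp rate.
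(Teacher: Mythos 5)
Your proposal is correct and follows essentially the same route as the paper, which simply defers to the proofs of the cited reference \cite{ward13} after noting that the kernel $\tilde{K}=P(\bm{x})/\abs{\bm{x}}^{d+\ell}$ retains the three properties those proofs need: mean zero on the sphere, homogeneous decay, and smoothness away from the origin. Your three-region decomposition uses precisely these three ingredients (spherical cancellation of $\Omega$ near the singularity, Taylor expansion of the smooth kernel against the vanishing moments of $\psi$ near $\bm{z}=\bm{0}$, and direct size estimates elsewhere), so you are in effect reconstructing the argument that the paper only cites.
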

\begin{proof}
This follows from the proofs of \cite[Theorems 3.2 and 3.4]{ward13}.  Those results were stated for the first-order Riesz transform; however, one can verify that they hold for a more general class of principal value singular integral operators.  

The kernels of the singular integral operators that define the Riesz transform are $K(\bm{x}) = x_k/\abs{\bm{x}}^{d+1}$, while the kernels used in the harmonic Riesz transforms have the form $\tilde{K}=P(\bm{x})/\abs{\bm{x}}^{d+\ell}$, where $P$ is a homogeneous harmonic polynomial of degree $\ell$.  The essential properties of Riesz kernels that were used in the proof of those theorems were:
\begin{itemize}
\item $K$ has mean zero on the unit sphere;
\item the kernel $K$ and its derivatives satisfy certain decay conditions;
\item $K$ is smooth away from the origin, so that it can be well approximated locally by polynomials.
\end{itemize}
It can be verified that each of these conditions holds for $\tilde{K}$, and hence the results of \cite{ward13} are applicable as well.
\end{proof}

\subsection{Generalized harmonic Riesz transforms}

While directly using the spherical harmonics in a steerable wavelet frame provides a means of categorizing data, we would like to extend this method to make our wavelets adaptable and possibly easier to implement. The approach we take is to compose the harmonic Riesz transforms with matrices representing isometries.  Using an isometry, the derived collection of Fourier multipliers will again be admissible.

\begin{definition}\label{def:ghrt}
Let $T_{\mathcal{M}}$ be a harmonic Riesz transform consisting of $N$ elements. Additionally, let $\mathbf{U}$ be a complex valued matrix of size $n_{\text{max}} \times N$, which represents an isometry; i.e., $\mathbf{U}^T\mathbf{U}$ is the identity matrix of size $N$.  We define the associated generalized harmonic Riesz transform of $f\in L_2(\mathbb{R}^d)$ to be the vector-valued function $T_{\mathcal{M},\mathbf{U}}(f)$ obtained by applying $\mathbf{U}$ to the harmonic Riesz transform $T_{\mathcal{M}}(f)$ of $f$; i.e., the components of $T_{\mathcal{M},\mathbf{U}}(f)$ are linear combinations of the components of $T_{\mathcal{M}}(f)$. 
\end{definition}

Note that in this definition, we deal with two admissible families.  Therefore, we have denoted the size of the original family as $N$, in order to reserve $n_{\text{max}}$ for the size of the derived family. Also, note that
a consequence of the isometry condition is that $n_{\text{max}}\geq N$.  Additionally, if the initial harmonic Riesz transform $T_{\mathcal{M}}$ is defined by a vector $\bm{c}$ with all non-zero entries, then in the above definition $N=N(d+1,\ell_{\text{max}})$. 

\section{Steerable wavelets}\label{sec:steer_wave}

In previous sections we covered the mathematical tools necessary to transform a wavelet frame into a steerable one. In this section, we complete the construction by introducing an appropriate primal wavelet basis. Our choice is the direct extension of the two dimensional case \cite{unser13}.

\begin{proposition}\label{pr:isomom}
Let $h:[0,\infty)\rightarrow \mathbb{R}$ be a smooth function satisfying:

\begin{itemize}
\item[(1)] $h(\omega)=0$ for $\abs{\omega}>1/2$ 
\item[(2)] ${\displaystyle \sum_{j\in \mathbb{Z}} \abs{h(2^j\omega)}^2=1}$
\item[(3)] ${\displaystyle \left.\frac{{\rm d}^nh(\omega)}{{\rm d}\omega^n}\right|_{\omega=0}=0}$ for $n=0,\dots,N$.
\end{itemize}
Then the isotropic mother wavelet $\psi$ whose $d$-dimensional Fourier transform is given by
\begin{equation*}
\widehat{\psi}(\bm{\omega})=h(\abs{\bm{\omega}})
\end{equation*}
generates a tight wavelet frame of $L_2(\mathbb{R}^d)$ whose basis functions
\begin{equation*}
\psi_{j,\bm{k}}(\bm{x})=\psi_j(\bm{x}-2^j\bm{k})  \text{\hspace*{.5cm} with \hspace*{.5cm} } \psi_{j}(\bm{x})=2^{-jd/2}\psi(2^{-j}\bm{x}) 
\end{equation*}
are isotropic with vanishing moments up to order $N$.  Additionally, any $L_2(\mathbb{R}^d)$ function $f$ can be represented as

\begin{equation*}
f = \sum_{j\in\mathbb{Z}}\sum_{\bm{k}\in \mathbb{Z}^d} \left<f,\psi_{j,\bm{k}}\right> \psi_{j,\bm{k}}.
\end{equation*}
\end{proposition}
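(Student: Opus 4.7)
The plan is to translate everything to the Fourier side, where the three hypotheses on $h$ are tailor-made for the computation. Isotropy of every $\psi_{j,\bm{k}}$ is immediate: since $\widehat{\psi}(\bm{\omega})=h(\abs{\bm{\omega}})$ is radial, so is $\psi$, and dilation plus translation produce a function that is still rotationally symmetric about its own center $2^j\bm{k}$.

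For the tight frame identity, I would first compute
\begin{equation*}
\widehat{\psi_{j,\bm{k}}}(\bm{\omega}) = 2^{jd/2}\, h(2^j\abs{\bm{\omega}})\, e^{-2\pi i 2^j\bm{k}\cdot\bm{\omega}}
\end{equation*}
and apply Plancherel to obtain
\begin{equation*}
\left<f,\psi_{j,\bm{k}}\right> = 2^{jd/2}\int_{\mathbb{R}^d} \widehat{f}(\bm{\omega})\, h(2^j\abs{\bm{\omega}})\, e^{2\pi i 2^j\bm{k}\cdot\bm{\omega}}\,{\rm d}\bm{\omega}.
\end{equation*}
After the change of variables $\bm{\eta}=2^j\bm{\omega}$, this is $2^{-jd/2}$ times the $\bm{k}$-th Fourier coefficient of the function $\widehat{f}(2^{-j}\bm{\eta})h(\abs{\bm{\eta}})$, viewed on the unit cube. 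Hypothesis (1) places the support of this integrand inside the ball $\abs{\bm{\eta}}\leq 1/2$, which sits within the fundamental cube $[-1/2,1/2]^d$, so the periodization involved in the Parseval identity for Fourier series has no overlapping shifts and collapses cleanly. Reversing the substitution yields, for each fixed $j$,
\begin{equation*}
\sum_{\bm{k}\in\mathbb{Z}^d}\abs{\left<f,\psi_{j,\bm{k}}\right>}^2 = \int_{\mathbb{R}^d} \abs{\widehat{f}(\bm{\omega})}^2 \abs{h(2^j\abs{\bm{\omega}})}^2\,{\rm d}\bm{\omega}.
\end{equation*}

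Summing over $j\in\mathbb{Z}$ and exchanging the sum with the integral (by Tonelli), hypothesis (2) identifies the inner sum with $1$ for almost every $\bm{\omega}\neq\bm{0}$, so $\sum_{j,\bm{k}}\abs{\left<f,\psi_{j,\bm{k}}\right>}^2 = \norm{\widehat{f}}_{L_2(\mathbb{R}^d)}^2 = \norm{f}_{L_2(\mathbb{R}^d)}^2$ by Plancherel. This is the Parseval frame identity with constant $1$, and the reproduction formula then follows from the standard fact that a Parseval frame is self-dual (equivalently, the associated frame operator is the identity). For vanishing moments, hypothesis (3) gives $h(r)=O(r^{N+1})$ as $r\to 0^+$, hence $\widehat{\psi}(\bm{\omega})=O(\abs{\bm{\omega}}^{N+1})$ at the origin; every mixed partial $D^{\bm{\alpha}}\widehat{\psi}(\bm{0})$ with $\abs{\bm{\alpha}}\leq N$ then vanishes, which by the standard correspondence between Taylor coefficients of $\widehat{\psi}$ and polynomial moments of $\psi$ translates to $\int_{\mathbb{R}^d}\bm{x}^{\bm{\alpha}}\psi(\bm{x})\,{\rm d}\bm{x}=0$ for $\abs{\bm{\alpha}}\leq N$, and a change of variables transports the same property to each $\psi_{j,\bm{k}}$.

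The only genuinely delicate step is the non-overlap argument for the periodization: without the support condition (1) tied to the dyadic scaling one would only obtain an inequality with aliasing terms, not the clean equality needed for a tight frame. Everything else reduces to Plancherel, Fubini-Tonelli, and standard Fourier duality between smoothness at the origin and polynomial decay of moments.
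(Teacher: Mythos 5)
Your argument is correct and is exactly the route the paper intends: its one-line proof cites ``Parseval's identity for Fourier transforms and Plancherel's identity for Fourier series,'' which is precisely your combination of Plancherel on $\mathbb{R}^d$, the non-aliasing periodization on $[-1/2,1/2]^d$ enabled by the support condition (1), and the dyadic partition of unity (2). You have simply filled in the details (plus the isotropy and vanishing-moment claims) that the paper leaves implicit.
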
 
\begin{proof}
This follows from a combination of Parseval's identity for Fourier transforms and Plancherel's identity for Fourier series.
\end{proof}

We now define a steerable wavelet frame to be a generalized harmonic Riesz transform of a primal isotropic frame.  Theorem \ref{th:ghrtpf} guarantees that the frame bounds are preserved and that we maintain the reproduction property:

\begin{equation*}
\forall f\in L_2(\mathbb{R}^d), \hspace*{1cm} f = \sum_{j\in\mathbb{Z}}\sum_{\bm{k}\in \mathbb{Z}^d} \sum_{n=1}^{n_{\text{max}}}\left<f,[T_{\mathcal{M},\mathbf{U}}\psi_{j,\bm{k}}]_n\right> [T_{\mathcal{M},\mathbf{U}}\psi_{j,\bm{k}}]_n
\end{equation*}

As we are applying the harmonic Riesz transforms to isotropic functions, the transform can be reduced to a more manageable form.  Essentially, the following is a simplification of Theorem \ref{th:sift}, which uses radial symmetry to reduce the Fourier transform to a one dimensional integral.  

\begin{theorem}\cite[Theorem IV.3.10]{stein71}
Suppose $d\geq 2$ and $\widehat{\psi}\in L^2(\mathbb{R}^d)\cap L^1(\mathbb{R}^d)$ has the form
\begin{equation*}
\widehat{\psi}(\bm{\omega})=h(\abs{\bm{\omega}})\frac{P(\bm{\omega})}{\abs{\bm{\omega}}^\ell},
\end{equation*}
where $P$ is a homogeneous harmonic polynomial of degree $\ell$, then $\psi$ has the form $\psi(\bm{x})=F(\abs{\bm{x}})P(\bm{x})$ where
\begin{equation*}
F(r) = 2\pi i^{\ell} r^{-(d+2\ell-2)/2}\int_0^\infty h(s) J_{(d+2\ell-2)/2}(2\pi rs)s^{d/2}{\rm d}s
\end{equation*}
and $J_{\nu}$ is the Bessel function of the first kind of order $\nu$. 
\end{theorem}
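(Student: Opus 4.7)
The plan is to compute $\psi = \mathcal{F}^{-1}\widehat{\psi}$ directly by switching to polar coordinates $\bm{\omega}=s\bm{\xi}$ and reducing the resulting spherical integral to a Bessel transform via the classical angular identity for spherical harmonics. Using the form $\widehat{\psi}(\bm{\omega}) = h(\abs{\bm{\omega}})P(\bm{\omega})/\abs{\bm{\omega}}^\ell$ and the homogeneity $P(s\bm{\xi})=s^\ell P(\bm{\xi})$, I first write
\begin{equation*}
\psi(\bm{x}) = \int_0^\infty h(s)\, s^{d-1} \int_{\mathbb{S}^{d-1}} P(\bm{\xi})\, e^{2\pi i s\, \bm{x}\cdot \bm{\xi}}\, {\rm d}\sigma(\bm{\xi})\, {\rm d}s,
\end{equation*}
which is legitimate because the hypothesis $\widehat{\psi}\in L_1(\mathbb{R}^d)$ permits pointwise inversion and Fubini.

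Next I would invoke the key angular identity: for every spherical harmonic $Y$ of degree $\ell$ on $\mathbb{S}^{d-1}$ and every unit vector $\bm{\eta}\in\mathbb{S}^{d-1}$,
\begin{equation*}
\int_{\mathbb{S}^{d-1}} Y(\bm{\xi})\, e^{2\pi i r\, \bm{\eta}\cdot\bm{\xi}}\, {\rm d}\sigma(\bm{\xi}) = 2\pi i^\ell\, r^{-(d-2)/2}\, J_{\ell+(d-2)/2}(2\pi r)\, Y(\bm{\eta}).
\end{equation*}
Applying this with $Y=P|_{\mathbb{S}^{d-1}}$, $r=s\abs{\bm{x}}$, and $\bm{\eta}=\bm{x}/\abs{\bm{x}}$ reduces the inner integral to $2\pi i^\ell (s\abs{\bm{x}})^{-(d-2)/2} J_{(d+2\ell-2)/2}(2\pi s\abs{\bm{x}})\, P(\bm{x}/\abs{\bm{x}})$. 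Substituting $P(\bm{x}/\abs{\bm{x}})=\abs{\bm{x}}^{-\ell}P(\bm{x})$ pulls the factor $P(\bm{x})$ out of the $s$-integral, and collecting powers of $\abs{\bm{x}}$ yields $\psi(\bm{x})=F(\abs{\bm{x}})P(\bm{x})$ with the stated $F$.

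The main obstacle is justifying the angular identity, which is a form of the Bochner--Hecke (or Funk--Hecke) theorem. I would obtain it by expanding the plane wave $e^{2\pi i r\, \bm{\eta}\cdot\bm{\xi}}$ in Gegenbauer polynomials $C_n^{(d-2)/2}(\bm{\eta}\cdot\bm{\xi})$ (the $d$-dimensional Jacobi--Anger expansion, whose coefficients are Bessel functions of the first kind), and then using orthogonality of spherical harmonics of distinct degrees together with the reproducing property of the zonal harmonic to isolate the degree-$\ell$ component. An alternative derivation proceeds by analytic continuation from Hecke's identity $\mathcal{F}\{P(\bm{x})e^{-\pi t\abs{\bm{x}}^2}\} = i^{-\ell} t^{-\ell-d/2} P(\bm{\omega})e^{-\pi\abs{\bm{\omega}}^2/t}$, itself a direct consequence of $P$ being harmonic. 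Either route is standard, and once the identity is in hand the remaining bookkeeping of powers of $s$ and $\abs{\bm{x}}$ is routine.
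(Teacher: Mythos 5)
The paper does not prove this statement at all: it is quoted verbatim from Stein and Weiss \cite[Theorem IV.3.10]{stein71}, so there is no internal argument to compare against. Your sketch is a correct reconstruction of the standard proof of that theorem. The reduction to polar coordinates is justified exactly as you say by $\widehat{\psi}\in L_1(\mathbb{R}^d)$, the homogeneity $P(s\bm{\xi})=s^{\ell}P(\bm{\xi})$ correctly cancels the $\abs{\bm{\omega}}^{-\ell}$, and the angular identity you invoke is the Bochner--Funk--Hecke relation in precisely the right normalization: with the $e^{+2\pi i}$ convention the phase is $i^{\ell}$ (the forward transform would give $i^{-\ell}$), the prefactor $(2\pi)^{d/2}(2\pi r)^{-(d-2)/2}=2\pi r^{-(d-2)/2}$ matches, and the order $\ell+(d-2)/2=(d+2\ell-2)/2$ is the one in the statement. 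The exponent bookkeeping also checks: $s^{d-1}\cdot s^{-(d-2)/2}=s^{d/2}$ and $\abs{\bm{x}}^{-(d-2)/2}\cdot\abs{\bm{x}}^{-\ell}=\abs{\bm{x}}^{-(d+2\ell-2)/2}$, giving exactly the stated $F$. Your two proposed derivations of the angular identity (Gegenbauer expansion of the plane wave, or analytic continuation in $t$ from Hecke's Gaussian identity) are both standard and correct; the second is in fact the route Stein and Weiss take. The only point worth making explicit if you wrote this out in full is the interchange of the $s$-integral with the spherical integral and the sense in which the pointwise inversion formula recovers $\psi$ (a.e., or after identifying the $L_2$ and $L_1$ inverses), but these are routine under the stated hypotheses.
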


\section{Directional wavelets using zonal harmonics}\label{sec:zonal_harm}

We use the term steerable to convey the fact that the wavelets we construct are intended to be rotated (or steered) to provide a better analysis of the data. To see how this is accomplished, let us consider a generic steerable wavelet $\psi_{\text{Gen}}$; i.e., an element of the generalized harmonic Riesz transform $T_{\mathcal{M},\mathbf{U}}(\psi)$ where $\psi$ is a primal isotropic wavelet.  Each such function is of the form 

\begin{equation*}
\widehat{\psi}_{\text{Gen}}(\bm{\omega}) = h(\abs{\bm{\omega}}) \sum_{\ell = 0}^{\ell_{\text{max}}} \sum_{k=1}^{N(d,\ell)} u_{\ell,k}Y_{\ell,k}\left(\frac{\bm{\omega}}{\abs{\bm{\omega}}}\right)
\end{equation*}
where the coefficients $u_{\ell,k}$ are related to the rows of $\mathbf{U}$.  Of particular interest are the cases where   

\begin{equation*}
u_{\ell,k} = u_\ell Y_{\ell,k}\left(\frac{\bm{\omega}_0}{\abs{\bm{\omega}_0}}\right)
\end{equation*}
for some $\bm{\omega}_0\in\mathbb{R}^d\backslash \{0\}$.  The resulting spherical function is a zonal function, and it has the form 

\begin{equation*}
\sum_{\ell=0}^{\ell_{\text{max}}} u_\ell \frac{N(d,\ell)}{\sigma(\mathbb{S}^{d-1})}P_{\ell}\left(d;\frac{\bm{\omega}_0}{\abs{\bm{\omega}_0}}\cdot \frac{\bm{\omega}}{\abs{\bm{\omega}}}\right) 
\end{equation*}
where $P_{\ell}(d;\cdot)$ is a generalized Legendre polynomial of degree $\ell$, cf. \cite[Section 1.2]{muller98}. A formula for these polynomials is

\begin{equation*}
P_\ell(d;x) = \ell ! \Gamma\left(\frac{d-1}{2} \right) \sum_{l=0}^{\lfloor \ell/2\rfloor} \left(\frac{-1}{4} \right)^l \frac{(1-x^2)^lx^{\ell-2l}}{l!(\ell-2l)!\Gamma(l+(d-1)/2)}.
\end{equation*}
for $x\in[-1,1]$.

One benefit of using these zonal functions is that we can compute rotations fairly effortlessly using the formula:

\begin{equation*}
P_{\ell}(d;\mathbf{R} \bm{\omega}_0 \cdot \mathbf{R} \bm{\omega}  ) = P_{\ell}(d;\bm{\omega}_0 \cdot \bm{\omega}  ),
\end{equation*}
for any matrix $\mathbf{R}$ satisfying $\mathbf{R}^{-1}=\mathbf{R}^{T}$.  In addition to making rotations  straightforward, this structure implies that the value of $\widehat{\psi}_{\text{Gen}}$ is determined solely by the distance of $\bm{\omega}_0/\abs{\bm{\omega}_0}$ from $\bm{\omega}/\abs{\bm{\omega}}$ on the sphere; i.e., it is a zonal function.

Zonal functions have proved to be particularly valuable for approximation on spheres. For example, positive linear combinations of generalized Legendre polynomials are positive semi-definite functions, which can be used for interpolation \cite{schoenberg42,schreiner97,wendland05,xu92}.  Indeed, the steerable wavelet construction we propose uses polynomials which are positive semi-definite on the sphere.

\subsection{Two dimensions}

We would now like to present some information regarding the implementation of steerable wavelets based on generalized Legendre polynomials, and we shall start with a two-dimensional motivating example.  A basis for the circular harmonics of degree $\ell$ on $\mathbb{S}^{1}$ is 
\begin{equation*}
\{\sin(\ell\omega),\cos(\ell\omega)\}.
\end{equation*}
Considering Definition \ref{def:ghrt}, we shall define an isometry to generate a new partition of unity. Note that we shall neglect certain scaling factors, as they do not impact the underlying principle. 

Given a point $\omega_0 \in \mathbb{S}^1$, we define the kernel $P$ in the span of the degree $\ell$ basis by 
\begin{align*}
P(\omega_0,\omega) &= \sin(\ell \omega_0)\sin(\ell \omega) + \cos(\ell \omega_0)\cos(\ell \omega)\\
 &= \cos(\ell(\omega_0-\omega)).
\end{align*}
For a collection of points $\{\omega_n\}_{n=1}^{n_{\text{max}}}$ the matrix that transforms the degree $\ell$ basis $\{\sin(\ell\omega),\cos(\ell\omega)\}$ into $\{\cos(\ell(\omega-\omega_1)),\dots,\cos(\ell(\omega-\omega_{n_{\text{max}}}))\}$ is 

\begin{equation*}
\mathbf{U}_{\ell} =
 \begin{pmatrix}
   \sin(\ell \omega_1) & \cos(\ell \omega_1) \\
   \sin(\ell \omega_2) & \cos(\ell \omega_2) \\
   \vdots & \vdots \\
   \sin(\ell \omega_{n_{\text{max}}}) & \cos(\ell \omega_{n_{\text{max}}}) \\
 \end{pmatrix}.
\end{equation*}
To ensure that the columns of $\mathbf{U}_{\ell}$ are orthogonal, we require

\begin{align*}
0 &= \sum_{n=1}^{n_{\text{max}}} \sin(\ell \omega_n) \cos(\ell \omega_n)\\
 &= \sum_{n=1}^{n_{\text{max}}} \sin(2\ell \omega_n).
\end{align*}
We could consider choosing the points $\omega_n$ to be roots of $\sin(2\ell\cdot)$; however, this approach would be less useful in higher dimensions.  Instead, we shall use a circular quadrature rule with equal weights.  Let $\{\omega_n\}_{n=1}^{n_{\text{max}}}$ be a set of points for which

\begin{equation}\label{eq:2dquad}
\int_{0}^{2\pi} p(\omega) {\rm d}\omega = \frac{2\pi}{n_{\text{max}}} \sum_{n=1}^{n_{\text{max}}} p(\omega_n)
\end{equation}
for all trigonometric polynomials of degree at most $2\ell$. Then

\begin{equation*}
0=\int_{0}^{2\pi} \sin(2\ell\omega){\rm d}\omega.
\end{equation*}
would imply that the columns of $\mathbf{U}_{\ell}$ are orthogonal, and hence that $\mathbf{U}_{\ell}$ is an isometry (after normalization).

Sets of points satisfying Equation \eqref{eq:2dquad} are well known and are referred to as spherical $t$-designs, where $t$ indicates the maximum degree polynomial for which quadrature holds.  Such sets are known to exist for arbitrarily large $t$ \cite{seymour84}, and the most natural choices consist of equidistributed points \cite{bannai09}.  Incidentally, Simoncelli's two dimensional equiangular steerable wavelet construction can be reinterpreted in terms of such $t$-designs \cite{portilla00}.

\subsection{Higher dimensions}

Fix $\ell>0$ and let $\{Y_{\ell,k}\}_{k=1}^{N(d,\ell)}$ be an orthonormal basis for the spherical harmonics of degree  $\ell$ on $\mathbb{S}^{d-1}$.  As in the two-dimensional case, we shall define an isometry to generate a new partition of unity. Specifically, we select $\mathbf{U}_{\ell}$ to be the matrix satisfying

\begin{equation*}
 [\mathbf{U}_{\ell}]_{n,k} = Y_{\ell,k}(\bm{\omega}_n)
\end{equation*}
for a collection of points $X=\{\bm{\omega}_n\}_{n=1}^{n_{\text{max}}} \subset \mathbb{S}^{d-1} $ to be specified. Delsarte et. al refer to this as the $\ell$th characteristic matrix associated with $X$ \cite[Definition 3.4]{delsarte77}.  Applying $\mathbf{U}_\ell$ to the basis generates a new basis of zonal polynomials

\begin{equation*}
\sum_{k=1}^{N(d,\ell)} Y_{\ell,k}(\bm{\omega}_m) Y_{\ell,k}(\bm{\omega}) = \frac{N(d,\ell)}{\sigma(\mathbb{S}^{d-1})}P_{\ell}(d;\bm{\omega}_m \cdot \bm{\omega}  ).
\end{equation*}

In order to make the columns of $\mathbf{U}_{\ell}$ orthogonal, we need

\begin{equation}\label{eq:orthog_cols}
0 =\sum_{n=1}^{n_{\text{max}}} Y_{\ell,k}(\bm{\omega}_n) Y_{\ell,k'}(\bm{\omega}_{n})\\
\end{equation}
for $k\neq k'$.  As in the two-dimensional case, we choose the points $X$ to form a spherical $2\ell$-design, so that 

\begin{equation}\label{eq:t_design}
\frac{\sigma(\mathbb{S}^{d-1})}{n_{\text{max}}}\sum_{n=1}^{n_{\text{max}}} p(\bm{\omega}_n) = \int_{\mathbb{S}^{d-1}}p(\bm{\omega}){\rm d} \sigma(\bm{\omega}) \\
\end{equation}
for spherical harmonics of degree at most $2\ell$.  In fact, the conditions on $X$ considered in \eqref{eq:orthog_cols} and \eqref{eq:t_design} are almost equivalent, cf. \cite[Remark 5.4]{delsarte77}.  On $\mathbb{S}^2$, examples of $t$-designs are provided by the vertices of platonic solids: the vertices of an icosahedron or a dodecahedron constitute $4$-designs \cite{hardin92}.  More generally, $t$-designs appear as orbits of elements of $\mathbb{S}^{d-1}$ under the action of a finite subgroup of the orthogonal group on the sphere \cite{bannai09}. Some specific examples are given in an online library of $t$-designs \cite{hardinwww}. In particular, this library contains $t$-designs on $\mathbb{S}^2$, where $t$ ranges from $0$ to $21$.

\subsection{Localized kernels}

Concerning the construction of zonal basis functions, one final point to address is localization.  The reason for this is that well localized functions can be used to more accurately represent the orientation of data.  While we would ideally like to use locally supported functions, they cannot be represented as polynomials. Therefore, we shall instead use a normalized polynomial approximation of the identity.  

Let us first recall that on the circle, the delta function can be represented by

\begin{equation*}
\delta_0(\omega) = \frac{1}{2\pi}+\frac{1}{\pi}\sum_{\ell=1}^\infty \cos(\ell \omega),
\end{equation*}
and we could construction an approximation by truncating this series,  producing a Dirichlet kernel.  The problem with such a construction is that the Dirichlet kernel is highly oscillatory.  Therefore, we instead propose to construct approximate identities analogous to the scaling functions of Freeden et al. \cite[Section 11.1.3]{freeden98}.  For this construction, we begin with a compactly supported function $\widehat{a}:[0,\infty)\rightarrow [0,1]$ satisfying:
\begin{enumerate}
\item $\widehat{a}(0)=1$, $\widehat{a}(1)=0$, and $\widehat{a}(\omega)>0$ for $\omega \in (0,1)$;
\item $\widehat{a}$ is monotonically decreasing;
\item $\widehat{a}$ is continuous at $0$ and piecewise continuous on $[0,1]$.
\end{enumerate}
Then an approximation to $\delta_0$ is given by

\begin{equation*}
S_{\ell_{\text{max}}}(\omega) = \frac{1}{2\pi}+\frac{1}{\pi}\sum_{\ell=1}^{\ell_{\text{max}}} \widehat{a}\left(\frac{\ell}{\ell_{\text{max}}+1}\right)\cos(\ell \omega).
\end{equation*}

Now, based on the results of the previous section, we know that an admissible zonal basis is given by

\begin{equation*}
\Lambda(\omega-\omega_n) = c_0\sqrt{\frac{1}{n_{\text{max}}}} + \sum_{\ell=1}^{\ell_{\text{max}}} c_\ell \sqrt{\frac{2}{n_{\text{max}}}} \cos(\ell (\omega-\omega_n))
\end{equation*}
for any unit vector $\bm{c}$ and any circular $2 \ell_{\text{max}}$-design $\{\omega_n\}_{n=1}^{n_{\text{max}}}$. Therefore we should choose $\bm{c}$ to be a normalization of the coefficients of $S_{\ell_{\text{max}}}$ to obtain a well localized basis.  

Several examples of such functions appear in the literature. For instance, in 
\cite[Section 11.4.3]{freeden98} the authors propose the cubic polynomial

\begin{equation}\label{eq:cubic_poly_wave}
\widehat{a}(\omega)= (1-\omega)^2(1+2\omega)
\end{equation}
to produce a function $S_{\ell_{\text{max}}}(\omega)$ with suppressed oscillations.  An alternative choice, based on the localization analysis of \cite{narcowich06ltf} and a construction from \cite{mhaskar00}, is to choose $\widehat{a}$ to be a B-spline centered at $0$. 
B-splines also appear implicitly in Simoncelli's steerable pyramid construction \cite{portilla00}.  The angular part of his wavelets are of the form
\begin{align*}
\cos(\omega)^{\ell_{\text{max}}} &= e^{i\omega\ell_{\text{max}}} \sum_{\ell=0}^{\ell_{\text{max}}} \binom{\ell_{\text{max}}}{\ell} e^{-i\omega2\ell} \\
&=\sum_{\ell=0}^{\ell_{\text{max}}} \binom{\ell_{\text{max}}}{\ell} e^{i\omega(\ell_{\text{max}}-2\ell)},
\end{align*}
and the binomial coefficients are the discrete analog of the B-splines.

In higher dimensions we can apply the same analysis to construct localized kernels. This leads us to approximate the delta function at $\bm{\omega}_0$ by

\begin{equation*}
S_{\ell_{\text{max}}}(\bm{\omega}_0\cdot\bm{\omega}) = \sum_{\ell=0}^{\ell_{\text{max}}} \widehat{a}\left(\frac{\ell}{\ell_{\text{max}}+1}\right) \frac{N(d,\ell)}{\sigma(\mathbb{S}^{d-1})} P_{\ell}(d;\bm{\omega}_0\cdot\bm{\omega}).
\end{equation*}
Additionally, our analysis requires that we use kernels of the form

\begin{equation*}
\Lambda(\bm{\omega}_0\cdot\bm{\omega}) = \sum_{\ell =0}^{\ell_{\text{max}}} c_\ell \sqrt{\frac{N(d,\ell)}{n_{\text{max}}}} P_{\ell}(d;\bm{\omega}_0\cdot\bm{\omega});
\end{equation*}
as before, we can adjust the coefficients of $S_{\ell_{\text{max}}}$ to determine an appropriate $\bm{c}$ in $\Lambda$.  Note that the choice of $\widehat{a}$ and $n_{\text{max}}$ must be balanced to produce a good kernel.  Choosing a smoother $\widehat{a}$ produces a kernel with less oscillation; however, its main lobe will be less localized.  To compensate, we could increase the degree $\ell_{\text{max}}$, but this means that we need to increase $n_{\text{max}}$; i.e., utilize a larger collection of basis functions.  Example zonal kernels $\Lambda$ are plotted in Figure \ref{fig:zonal_kernels}.

\begin{figure}
\begin{center}
\includegraphics[width=2.5in]{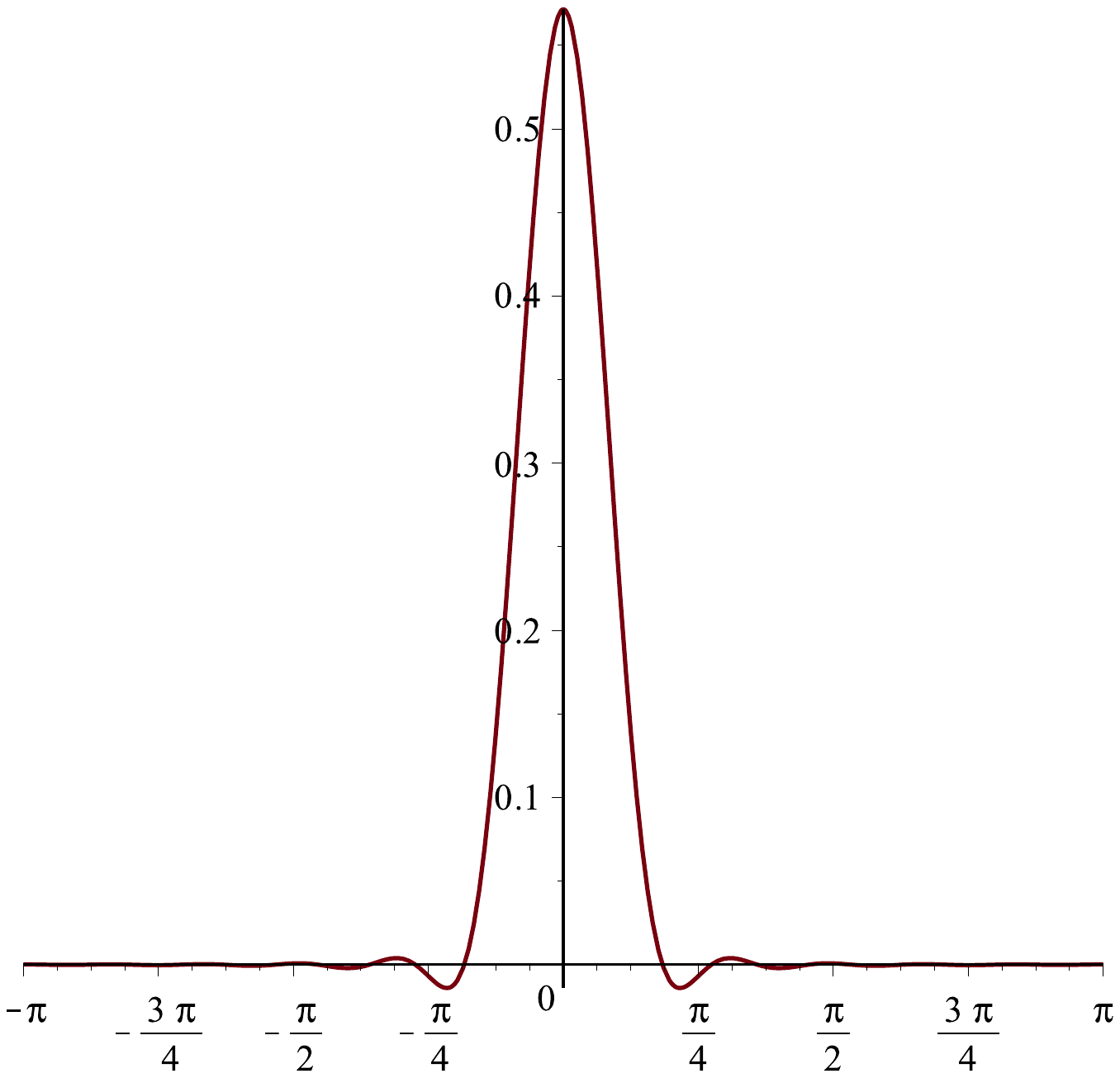}
\hspace{0.25 in}
\includegraphics[width=2.5in]{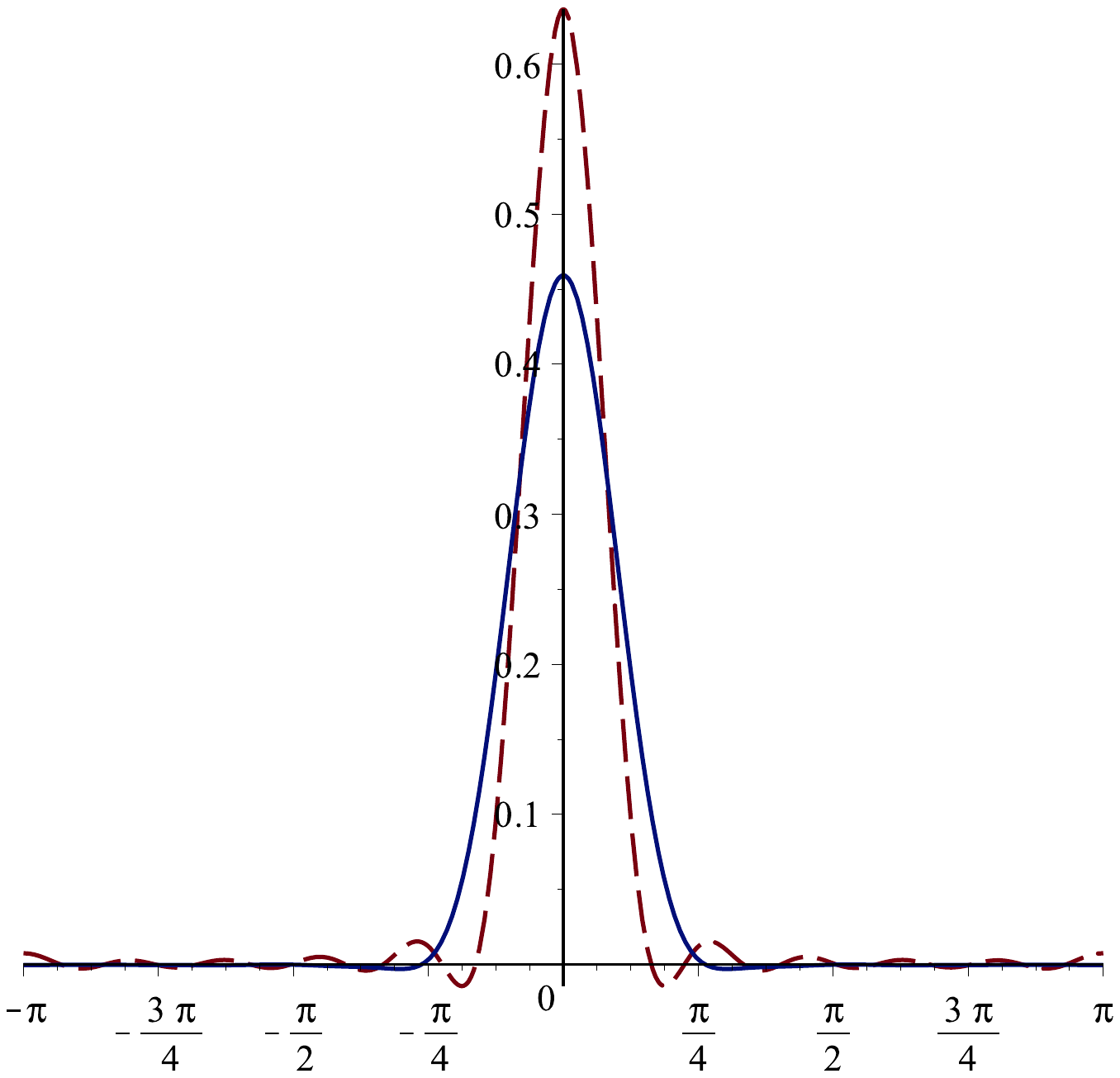}
\end{center}
\caption{Plot of zonal kernels $\Lambda(\cos(\cdot))$ for $d=3$, $\ell_{\text{max}}=10$, and $n_{\text{max}}=216$.  The kernel on the left was constructed using $\widehat{a}$ from \eqref{eq:cubic_poly_wave}, while the functions on the right were constructed by choosing $\widehat{a}$ to be B-splines. In the plot on the right, the dashed line corresponds to a linear B-spline, and the solid line corresponds to a cubic B-spline.}
\label{fig:zonal_kernels}
\end{figure}

If localization is of primary importance, one can define a variance on the sphere, which should be minimized by a polynomial of a given degree. Indeed, this approach was used in \cite{unser13} for steerable wavelets on $\mathbb{R}^2$.  Furthermore, an uncertainty principle for localization in both space and frequency was studied in \cite{freeden98,freeden99ca,narcowich96}.  Related results are contained in \cite{fernandez07}, where the author considers polynomials whose degrees lie within a given range.  Additional work concerning the localization of spherical Slepian functions can be found in \cite{simons10,simons06}.

In the two-dimensional case, Unser et al. extend the definition of variance and allow for a general class of weight functions \cite[Appendix A]{unser13}.  A similar generalized definition of variance was introduced by Michel on $\mathbb{S}^2\subset\mathbb{R}^3$ \cite{michel11}.  However, instead of using Fourier analysis and Bochner's theorem, his results utilize the theory of orthogonal polynomials.  In Appendix A, we provide an extension of this type of approach to the higher dimensional setting that can serve as a basis for the design of wavelets with an optimal angular selectivity.

\section{Construction and steering}

In this section we lay out the construction of the zonal basis and spherical harmonic basis in more detail. Furthermore, we show how the wavelets can be steered using matrix multiplication.  To begin the construction, we choose a maximum degree $\ell_{\text{max}}$ and a real orthonormal basis of spherical harmonics of degree at most $\ell_{\text{max}}$ in a vector $[\mathbf{Y}]_m = Y_m$ of length $N(d+1,\ell_{\text{max}})$.  Then, given a unit vector $\bm{c}=(c_0,\dots,c_{\text{max}})\in\mathbb{R}^{\ell_{\text{max}}+1}$, we define a diagonal matrix $\mathbf{C}$ of size $N(d+1,\ell_{\text{max}}) \times N(d+1,\ell_{\text{max}})$ as follows: For $[\mathbf{Y}]_m$ of degree $\ell$, we set
\begin{equation*}
[\mathbf{C}]_{m,m}= c_\ell \sqrt{\frac{\sigma(\mathbb{S}^{d-1})}{N(d,\ell)}}.
\end{equation*}
This implies that the entries of $\mathbf{C}\mathbf{Y}$ form an admissible collection. Next we choose a collection of points on the sphere
$X_0=\{\bm{\omega}_n\}_{n=1}^{n_{\text{max}}}$, which form a $2\ell_{\text{max}}$-design on $\mathbb{S}^{d-1}$. To construct the final admissible collection, we define the $n_{\text{max}}\times N(d+1,\ell_{\text{max}})$ matrix $\mathbf{U}_{X_0}$ as

\begin{equation*}
[\mathbf{U}_{X_0}]_{n,m} =  \sqrt{\frac{\sigma(\mathbb{S}^{d-1})}{n_{\text{max}}}} Y_m(\bm{\omega}_n).
\end{equation*}
Our admissible collection is then given by $\mathbf{Z}_{X_0}:=\mathbf{U}_{X_0}\mathbf{C}\mathbf{Y}$.

For steering, we use the isometry property of $\mathbf{U}_{X_0}$.  Let $\mathbf{R}$ be a rotation matrix and define $X_1$ to the collection of points obtained by rotating the elements of $X_0$ by $\mathbf{R}$. We then can expand the elements of the rotated basis $\mathbf{Z}_{X_1}$ in terms of the original basis $\mathbf{Z}_{X_0}$ as

\begin{align*}
\mathbf{Z}_{X_1} &= \mathbf{U}_{X_1}\mathbf{C}\mathbf{Y} \\
&= \mathbf{U}_{X_1} \mathbf{U}_{X_0}^T \mathbf{U}_{X_0} \mathbf{C}\mathbf{Y} \\
&= \left(\mathbf{U}_{X_1} \mathbf{U}_{X_0}^T \right) \mathbf{Z}_{X_0}.
\end{align*}

Since the steerable wavelet frame is a Parseval frame, the steering matrix, which transforms the wavelet coefficients from the original basis into the coefficients of the rotated basis, corresponds to the matrix mapping $\mathbf{Z}_{X_0}$ to $\mathbf{Z}_{X_1}$. Hence the $n_{\text{max}}\times n_{\text{max}}$ steering matrix is given by $\mathbf{S}:=\mathbf{U}_{X_1} \mathbf{U}_{X_0}^T$.  Using the properties of zonal harmonics, we can see that the entries of this matrix are pointwise evaluations of a zonal polynomial $\Lambda_{\ell_{\text{max}}}$ (see Figure \ref{fig:lam_max_10_216} for an example).  Precisely

\begin{align*}
\Lambda_{\ell_{\text{max}}}(\bm{\omega}\cdot\bm{\tilde{\omega}}) &= \frac{\sigma(\mathbb{S}^{d-1})}{n_{\text{max}}} \sum_{m=1}^{N(d+1,\ell_{\text{max}})} Y_m(\bm{\omega})Y_m(\bm{\tilde{\omega}}) \\
&=  \frac{\sigma(\mathbb{S}^{d-1})}{n_{\text{max}}} \sum_{\ell=0}^{\ell_{\text{max}}} \frac{N(d,\ell)}{\sigma(\mathbb{S}^{d-1})}P_{\ell}(d;\bm{\omega}\cdot\bm{\tilde{\omega}}) \\
&= \sum_{\ell=0}^{\ell_{\text{max}}} \frac{N(d,\ell)}{n_{\text{max}}}P_{\ell}(d;\bm{\omega}\cdot\bm{\tilde{\omega}})
\end{align*} 
and

\begin{equation*}
[\mathbf{S}]_{n_1,n_2} = \Lambda_{\ell_{\text{max}}}(\mathbf{R}\bm{\omega}_{n_1}\cdot\bm{\omega}_{n_2}).
\end{equation*}

\begin{figure}
\begin{center}
\includegraphics[width=2.5in]{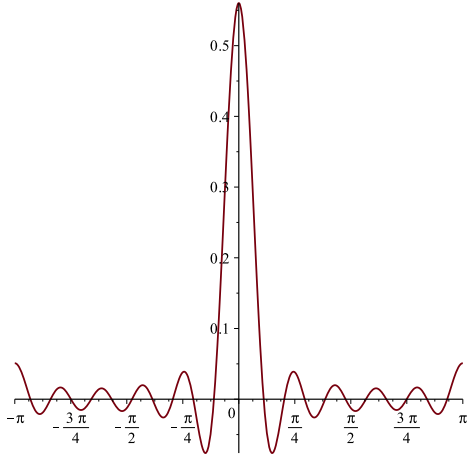}
\end{center}
\caption{Plot of the kernel $\Lambda_{\ell_\text{max}}(\cos(\cdot))$ for $\ell_{\text{max}}=10$ and $n_{\text{max}}=216$.}
\label{fig:lam_max_10_216}
\end{figure}

Interestingly, the steering operation is very much akin to an interpolation that uses $\Lambda_{\ell_{\text{max}}}$ as a kernel (cf. Figure \ref{fig:lam_max_10_216}).  

Depending on the number of points, the steering matrix can be quite large. As an alternative, one could work directly with the orthonormal spherical harmonic basis.  In this situation, the steering matrix reduces significantly as it is a block diagonal matrix with blocks of size $N(d,\ell)$. To see this, let us define the vector $\mathbf{Y}$ by

\begin{equation*}
\mathbf{Y} = \left(
\begin{matrix}
\mathbf{Y}_0      \\
\mathbf{Y}_1      \\
\vdots          \\
\mathbf{Y}_{\ell_{\text{max}}}   
\end{matrix} 
\right),
\end{equation*}
where each $\mathbf{Y}_\ell$ is a vector whose components are an orthonormal basis of the spherical harmonics of degree $\ell$:
\begin{equation*}
\mathbf{Y}_\ell = \left(
\begin{matrix}
Y_{\ell,1}(\bm{\omega})      \\
Y_{\ell,2}(\bm{\omega})       \\
\vdots          \\
Y_{\ell,N(d,\ell)}(\bm{\omega})    
\end{matrix} 
\right).
\end{equation*}
In order to make the entries of $\mathbf{Y}$ an admissible collection, we multiply by the block diagonal matrix

\begin{equation*}
\mathbf{C}=\left(
\begin{matrix}
\mathbf{C}_0 & \mathbf{0}   & \cdots & \mathbf{0}      \\
\mathbf{0}   & \mathbf{C}_1 & \cdots & \mathbf{0}      \\
\vdots       & \vdots       & \ddots & \vdots          \\
\mathbf{0}   & \mathbf{0}   & \cdots & \mathbf{C}_{\ell_{\text{max}}}
\end{matrix} 
\right),
\end{equation*}
where $\mathbf{C}_\ell$ is the $N(d,\ell)\times N(d,\ell)$ diagonal matrix with entries

\begin{equation*}
[\mathbf{C}_\ell]_{k,k}=\sqrt{\frac{\sigma(\mathbb{S}^{d-1})}{(\ell_{\text{max}}+1) N(d,\ell)}}.
\end{equation*}
With $\mathbf{C}$ defined in this way, we are giving equal weight to each degree $\ell$.  One could choose an alternative weighting, but it must be constant over a given degree for the partition of unity property to hold. Now to construct a steering matrix, we need to find an expression for the spherical harmonics in terms of any rotation of them.  Orthogonality between degrees of spherical harmonics makes this especially nice because it means that we can rotate any given degree independently of the others.  Therefore, let us fix $\ell$ and consider the problem of steering the functions in $\mathbf{C}_{\ell}\mathbf{Y}_{\ell}$.  As $\mathbf{C}_\ell$ is a constant multiple of the identity matrix, this is equivalent to finding a steering matrix for $\mathbf{Y}_{\ell}$. Since we are dealing with an orthonormal basis, we can expand any $Y_{\ell,k_0}$ as

\begin{equation*}
Y_{\ell,k_0}(\mathbf{R} \bm{\omega}) = \sum_{k=1}^{N(d,\ell)} \left<Y_{\ell,k_0}(\mathbf{R}\cdot), Y_{\ell,k}   \right> Y_{\ell,k}(\omega),
\end{equation*}
and computation of these inner products can be handled using a quadrature rule.  Specifically, for any $2\ell$-design $\{\omega_n\}_{n=1}^N$, we have

\begin{equation*}
 \left<Y_{\ell,k_0}(\mathbf{R}\cdot), Y_{\ell,k}   \right> = \frac{\sigma(\mathbb{S}^{d-1})}{N}\sum_{n=1}^{N} Y_{\ell,k_0}(\mathbf{R}\bm{\omega}_n) Y_{\ell,k} (\bm{\omega}_n).
\end{equation*}
Therefore, the $N(d,\ell)\times N(d,\ell)$ matrix $\mathbf{V}_\ell$ with entries

\begin{equation*}
[\mathbf{V}_\ell]_{k,k'} = \frac{\sigma(\mathbb{S}^{d-1})}{N}\sum_{n=1}^{N} Y_{\ell,k'}(\mathbf{R}\bm{\omega}_n) Y_{\ell,k} (\bm{\omega}_n)
\end{equation*}
transforms the original basis $\mathbf{Y}_\ell$ into the rotated basis $\mathbf{Y}_\ell^{\mathbf{R}}$, i.e. $\mathbf{V}_\ell \mathbf{Y}_\ell=\mathbf{Y}_\ell^\mathbf{R}$. Consequently, the steering matrix which transforms the wavelet coefficients corresponding to $\mathbf{Y}_\ell$ into the coefficients corresponding to $\mathbf{Y}_\ell^\mathbf{R}$ is also given by  $\mathbf{V}_\ell$.  Note that in two dimensions $N(d,\ell)=2$ for $\ell \geq 1$, and in three dimensions $N(d,\ell)=2\ell+1$. Therefore, these steering matrices can be significantly smaller than the ones used in the zonal construction when a large $\ell_{\text{max}}$ is chosen.

\section{Conclusion and practical summary}

Throughout the course of this paper, we have developed the theory of steerable wavelets in any number of dimensions greater than one.  The previous two sections were devoted to several technical aspects of implementation, and here we summarize the construction. Since the zonal construction is perhaps more tractable conceptually, we now concentrate on this.  One can think of this construction as a generalization for $d>2$ of Simoncelli's equiangular design. 

The substructure of a steerable wavelet frame is an isotropic mother wavelet $\psi$, which satisfies the conditions of Proposition \ref{pr:isomom}.  A new, expanded frame is produced by the collection of mother wavelets 

\begin{equation*}
\left\{ \mathcal{F}^{-1}\{m_n \widehat{\psi} \}: m_n\in \mathcal{M} \right\},
\end{equation*} 
where $\mathcal{M}$ is an admissible class of functions. Now given a maximum degree $\ell_{\text{max}}$, a unit vector $\mathbf{c}=(c_0,\dots,c_{\ell_{\text{max}}} )\in \mathbb{R}^{\ell_{\text{max}}+1}$, and a spherical $2\ell_{\text{max}}$-design $X=\{\bm{\omega}_n\}_{n=1}^{n_{\text{max}}}$, we define the admissible collection

\begin{equation*}
\mathcal{M}=\left\{ m_n(\bm{\omega}) = \sum_{\ell =0}^{\ell_{\text{max}}} c_\ell \sqrt{\frac{N(d,\ell)}{n_{\text{max}}}} P_{\ell}\left(d;\frac{\bm{\omega}_n}{\abs{\bm{\omega}_n}}\cdot\frac{\bm{\omega}}{\abs{\bm{\omega}}}\right) : n=1,\dots,n_{\text{max}}  \right\},
\end{equation*} 
where the $P_{\ell}(d;\cdot)$ are generalized Legendre polynomials. The main properties of this basis are summarized in the following theorem.

\begin{theorem}
The wavelet construction given above defines a tight curvelet-like wavelet frame of $L_2(\mathbb{R}^d)$, where all wavelets are rotated versions of a single wavelet template (per scale). Furthermore, these wavelets are parametrized with a set of coefficients $c_{\ell}$ that can be chosen arbitrarily.
\end{theorem}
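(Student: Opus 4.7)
The plan is to verify three claims in sequence: (i) the collection $\mathcal{M}$ is admissible in the sense of Definition \ref{def:adm}; (ii) the resulting wavelet family is a tight Parseval frame with the reproduction formula of Theorem \ref{th:ghrtpf}; and (iii) the wavelets at a fixed scale are rotated copies of a single template. The freedom in choosing $\bm{c}$ will emerge transparently along the way.

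For (i), each $m_n$ depends on $\bm{\omega}$ only through $\bm{\omega}/\abs{\bm{\omega}}$, so homogeneity of degree zero is immediate, and it remains to verify the partition of unity. Setting $\bm{\omega}' = \bm{\omega}/\abs{\bm{\omega}}$ and expanding the square gives
\begin{equation*}
\sum_{n=1}^{n_{\text{max}}} \abs{m_n(\bm{\omega})}^2 = \sum_{\ell,\ell'=0}^{\ell_{\text{max}}} c_\ell c_{\ell'} \frac{\sqrt{N(d,\ell) N(d,\ell')}}{n_{\text{max}}} \sum_{n=1}^{n_{\text{max}}} P_\ell(d; \bm{\omega}_n \cdot \bm{\omega}')\, P_{\ell'}(d; \bm{\omega}_n \cdot \bm{\omega}').
\end{equation*}
Viewed as a function of $\bm{\omega}_n$, the product $P_\ell P_{\ell'}$ is a polynomial on the sphere of total degree $\ell + \ell' \leq 2\ell_{\text{max}}$, so the $2\ell_{\text{max}}$-design hypothesis \eqref{eq:t_design} replaces the inner sum by $(n_{\text{max}}/\sigma(\mathbb{S}^{d-1}))$ times the integral of the same integrand over $\mathbb{S}^{d-1}$. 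Orthogonality of spherical harmonics of distinct degrees annihilates the off-diagonal $\ell \neq \ell'$ contributions, while the normalization $\int_{\mathbb{S}^{d-1}} \abs{P_\ell(d; \bm{\eta} \cdot \bm{\omega}')}^2 {\rm d}\sigma(\bm{\eta}) = \sigma(\mathbb{S}^{d-1})/N(d,\ell)$, obtained from \eqref{eq:repro} via the addition formula, collapses the diagonal to $\sum_\ell c_\ell^2 = 1$.

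Claim (ii) is then immediate from Theorem \ref{th:ghrtpf} applied to the Parseval frame $\{\psi_{j,\bm{k}}\}$ of Proposition \ref{pr:isomom}: the admissibility of $\mathcal{M}$ is precisely the hypothesis needed to conclude that $\{[T_\mathcal{M}(\psi_{j,\bm{k}})]_n\}$ inherits the Parseval property and the stated reconstruction identity. For (iii), I would fix a reference direction $\bm{e}_0 \in \mathbb{S}^{d-1}$ and set $m_*(\bm{\omega}) = \sum_\ell c_\ell \sqrt{N(d,\ell)/n_{\text{max}}}\, P_\ell(d; \bm{e}_0 \cdot \bm{\omega}/\abs{\bm{\omega}})$ together with the template $\psi_* = \mathcal{F}^{-1}\{m_* \widehat{\psi}\}$. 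For each $n$ select a rotation $\mathbf{R}_n$ with $\mathbf{R}_n \bm{e}_0 = \bm{\omega}_n$; the invariance identity $P_\ell(d; \bm{\omega}_n \cdot \bm{\omega}) = P_\ell(d; \bm{e}_0 \cdot \mathbf{R}_n^T \bm{\omega})$ recorded in the zonal-harmonic discussion yields $m_n(\bm{\omega}) = m_*(\mathbf{R}_n^T \bm{\omega})$. Since $\widehat{\psi}$ is radial, the rotation can be absorbed into its argument without change, and orthogonality of $\mathbf{R}_n$ lifts the identity to the spatial domain as $\mathcal{F}^{-1}\{m_n \widehat{\psi}\}(\bm{x}) = \psi_*(\mathbf{R}_n^T \bm{x})$. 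Composing with the scale-and-shift covariance recorded just before Theorem \ref{th:ghrtpf}, every wavelet $[T_\mathcal{M}(\psi_{j,\bm{k}})]_n$ becomes a rotated, dilated, translated copy of $\psi_*$; in particular, at each fixed scale $j$ the collection $\{[T_\mathcal{M}(\psi_{j,\bm{k}})]_n\}_n$ consists of rotations (and integer translations) of the common template $\psi_{*,j}(\bm{x}) = 2^{-jd/2}\psi_*(2^{-j}\bm{x})$.

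The parameter freedom is transparent: only $\abs{\bm{c}} = 1$ is used at any point, so any unit vector of coefficients produces a valid tight frame. The main obstacle is the bookkeeping in step (i)---specifically, recognizing that $P_\ell(d;\,\cdot\,)P_{\ell'}(d;\,\cdot\,)$, restricted to the sphere in one of its variables, is a polynomial of total degree at most $2\ell_{\text{max}}$, which is exactly the threshold at which the $t$-design hypothesis on $X$ becomes usable. Once this degree-matching is pinned down, steps (ii) and (iii) are formal consequences of Theorem \ref{th:ghrtpf}, the property displayed just before it, and the zonal structure of the multipliers.
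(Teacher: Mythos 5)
Your proposal is correct and follows the same route the paper intends: the theorem is stated there without an explicit proof, being the assembly of the $2\ell_{\text{max}}$-design quadrature (equivalently, the isometry of the normalized characteristic matrix $\mathbf{U}_{X_0}$), Theorem \ref{th:ghrtpf} applied to the primal frame of Proposition \ref{pr:isomom}, and the rotation invariance $P_\ell(d;\mathbf{R}\bm{\omega}_0\cdot\mathbf{R}\bm{\omega})=P_\ell(d;\bm{\omega}_0\cdot\bm{\omega})$ combined with the radiality of $\widehat{\psi}$. Your direct verification of the partition of unity is just the paper's isometry argument written out in the zonal basis, and it is carried out correctly.
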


Ideally, the vector $\bm{c}$ is chosen so that the functions $m_n$ are well localized and peaked at $\bm{\omega}=\bm{\omega}_n$. For large collections of points $X=\{\bm{\omega}_n\}_{n=1}^{n_{\text{max}}}$ on the sphere and well-localized functions $m_n(\bm{\omega})$, this new frame can detect the orientation of data and provide more information concerning structure. However, one can also work with a smaller collection $\mathcal{M}$ and take advantage of the steering property to orient the wavelet frame in a data-adaptive fashion.  In order to steer the wavelet basis, we need a matrix which transforms the wavelet coefficients upon rotation of the wavelet basis.  The structure of the zonal basis allows us to use the $n_{\text{max}}\times n_{\text{max}}$ steering matrix 

\begin{equation*}
[\mathbf{S}]_{n_1,n_2} = \Lambda_{\ell_{\text{max}}}(\mathbf{R}\bm{\omega}_{n_1}\cdot\bm{\omega}_{n_2})
\end{equation*}
for a rotation $\mathbf{R}$, where

\begin{align*}
\Lambda_{\ell_{\text{max}}}(\bm{\omega}\cdot\bm{\tilde{\omega}}) &=  \sum_{\ell=0}^{\ell_{\text{max}}} \frac{N(d,\ell)}{n_{\text{max}}}P_{\ell}(d;\bm{\omega}\cdot\bm{\tilde{\omega}}).
\end{align*} 
Notice that the entire construction is in terms of the zonal basis, which means we can avoid working directly with the spherical harmonics.  On the other hand, working with the spherical harmonics would allow for smaller steering matrices.

\appendix
\section{Optimal spherical polynomial constructions}

Using the work of Michel \cite{michel11} as a starting point, in this appendix we shall show how orthogonal polynomials can be used to construct spherical polynomials that minimize energy functionals on $\mathbb{S}^{d-1}$. In order to simplify computations let us fix a point $\bm{\omega}_0\in \mathbb{S}^{d-1}$ and assume that each of the generalized Legendre polynomials in $\{P_\ell(d;\bm{\omega}_0\cdot\bm{\omega})\}_{\ell=0}^{\ell_{\text{max}}}$ has been normalized to have $L_2(\mathbb{S}^{d-1})$ norm one.

Considering the framework of our problem, we shall address the problem of finding the polynomial (centered at $\bm{\omega_0}$)

\begin{equation*}
P_{\bm{c}}(\bm{\omega}_0\cdot\bm{\omega}) = \sum_{\ell=0}^{\ell_{\text{max}}} c_\ell P_\ell(d;\bm{\omega}_0\cdot \bm{\omega})
\end{equation*}  
of norm one in $L_2(\mathbb{S}^{d-1})$ that minimizes (or maximizes) an energy functional of the form

\begin{equation*}
E(P_{\bm{c}};W) =  \int_{\mathbb{S}^{d-1}}  \abs{P_{\bm{c}}(\bm{\omega}_0\cdot\bm{\omega})}^2 W(\bm{\omega}_0\cdot\bm{\omega}) {\rm d} \sigma(\bm{\omega}),
\end{equation*}  
where $W$ is an arbitrary positive continuous function. One possibility would be to choose $W(t)=\arccos(t)^2$, so that the energy functional would be

\begin{align*}
E(P_{\bm{c}};\arccos(\cdot)^2) &=  \int_{\mathbb{S}^{d-1}}  \abs{P_{\bm{c}}(\bm{\omega}_0\cdot\bm{\omega})}^2 \arccos(\bm{\omega}_0\cdot\bm{\omega})^2 {\rm d} \sigma(\bm{\omega}) \\
&=\int_{\mathbb{S}^{d-1}}  \abs{P_{\bm{c}}(\bm{\omega}_0\cdot\bm{\omega})}^2 \text{dist}(\bm{\omega}_0,\bm{\omega})^2 {\rm d} \sigma(\bm{\omega}), \\
\end{align*}
where \textit{dist} refers to the spherical distance.  

 As the energy functional contains only zonal functions, it can be reduced to a simpler form:

\begin{align*}
E(P_{\bm{c}};W) &=  \frac{\Gamma(\frac{d}{2})}{\sqrt{\pi}\Gamma(\frac{d-1}{2})} \int_0^{\pi} \abs{P_{\bm{c}}(\cos(\theta))}^2 W(\cos(\theta))\sin^{d-2}(\theta){\rm d} \theta \\
&=  \frac{\Gamma(\frac{d}{2})}{\sqrt{\pi}\Gamma(\frac{d-1}{2})} \int_{-1}^{1} \abs{P_{\bm{c}}(t)}^2 W(t)(1-t^2)^{(d-3)/2}{\rm d} t.
\end{align*}
Now notice that any polynomial of degree at most $\ell_{\text{max}}$ is a linear combination of generalized Legendre polynomials $\{P_\ell(d;\cdot): \ell=0\dots,\ell_{\text{max}}\}$.  Also, the assumptions on $W$ imply that there exists a sequence of polynomials $\{Q_\ell\}_{\ell=0}^{\ell_{\text{max}}}$ that are orthonormal with respect to $E$,

\begin{equation*}
 \frac{\Gamma(\frac{d}{2})}{\sqrt{\pi}\Gamma(\frac{d-1}{2})} \int_{-1}^{1} Q_\ell(t) Q_{\ell'}(t) W(t)(1-t)^{(d-3)/2}{\rm d} t =\delta_{\ell,\ell'},
\end{equation*}
where each $Q_\ell$ is a polynomial of degree $\ell$, cf. \cite{jackson33}. Hence, there exists an invertible change of basis matrix from 
 $\{P_\ell(d;\cdot) \}_{\ell=0}^{\ell_{\text{max}}}$ to  $\{Q_\ell\}_{\ell=0}^{\ell_{\text{max}}}$. Consequently, the polynomial $P_{\bm{c}}$ that minimizes the functional $E(P_{\bm{c}};W)$ is determined by setting $\bm{c}$ to be the unit eigenvector corresponding to the minimal eigenvalue (in absolute value) of the change of basis matrix.

\bibliographystyle{plain}
\bibliography{arxiv_01}

\end{document}